\documentclass[12pt,reqno]{amsart}

\usepackage{amsmath, amsthm, amsopn, amssymb, enumerate}

\usepackage{latexsym}
\usepackage{graphics}
\usepackage{graphicx}
\usepackage{verbatim} 
\usepackage{hyperref}
\usepackage[mathscr]{eucal}
\usepackage[text={6.5in,8.2in},centering]{geometry}




\newtheorem{thm}{Theorem}[section]
\newtheorem{lemma}[thm]{Lemma}

\newtheorem{prop}[thm]{Proposition}

\newtheorem{example}[thm]{Example}

\newtheorem*{claim*}{Claim}

\theoremstyle{definition}

\newtheorem{defn}[thm]{Definition}



\def\A{\mathcal{A}}

\def\C{\mathcal{C}}


\def\K{\mathcal{K}}

\def\Q{\mathcal{Q}}

\def\U{\mathcal{U}}
\def\V{\mathcal{V}}

\def\Ex{\mathbb{E}}

\def\N{\mathbb{N}}
\def\Pr{\mathbb{P}}

\def\vx{\mathbf{x}}
\def\vy{\mathbf{y}}
\def\vz{\mathbf{z}}

\def\le{\leqslant}
\def\ge{\geqslant}

\def\eps{\varepsilon}
\def\<{\langle}
\def\>{\rangle}

\def\Bin{\textup{Bin}}

\date{\today}

\author{Gonzalo Fiz Pontiveros}

\author{Simon Griffiths}

\author{Robert Morris} 

\author{David Saxton}

\author{Jozef Skokan}

 \address{
   Gonzalo Fiz Pontiveros, Simon Griffiths, Robert Morris, David Saxton \hfill\break
    IMPA, Estrada Dona Castorina 110, Jardim Bot\^anico, Rio de Janeiro, RJ, Brasil
 }
 \email{\{gf232|sgriff|rob|saxton\}@impa.br}
 
  \address{
   Jozef Skokan \hfill\break
    Department of Mathematics, LSE, Houghton Street, London, WC2A 2AE, England, and 
    Department of Mathematics, University of Illinois, 1409 W. Green Street, Urbana, IL 61801
 }
 \email{j.skokan@lse.ac.uk}

\thanks{Research supported in part by: CNPq bolsas PDJ (GFP, SG, DS), a CNPq bolsa de Produtividade em Pesquisa (RM). This work was begun during a visit of JS to IMPA in November 2012}

\begin{document}

\title{The Ramsey number of the clique and the hypercube}

\begin{abstract}
The Ramsey number $r(K_s,Q_n)$ is the smallest positive integer $N$ such that every red-blue colouring of the edges of the complete graph $K_N$ on $N$ vertices contains either a red $n$-dimensional hypercube, or a blue clique on $s$ vertices. Answering a question of Burr and Erd\H{o}s from 1983, and improving on recent results of Conlon, Fox, Lee and Sudakov, and of the current authors, we show that $r(K_s,Q_n) = (s-1) \big( 2^n - 1 \big) + 1$ for every $s \in \N$ and every sufficiently large $n \in \N$.
\end{abstract}

\maketitle

\pagestyle{myheadings}
\markboth{}{}

\section{Introduction}
 
One of the the most extensively-studied problems in Combinatorics is that of determining the \emph{Ramsey numbers} of graphs, defined as follows. Given graphs $G$ and $H$, let $r(G,H)$ denote the minimum integer $N$ such that every red-blue colouring of $E(K_N)$ contains either a blue copy of~$G$ or a red copy of~$H$. It follows from the classical theorem of Ramsey~\cite{Ramsey} that $r(G,H)$ is finite for every pair of (finite) graphs, but its order of magnitude is known only in a few special cases. In this paper we answer a question of Burr and Erd\H{o}s~\cite{BE83} from 1983 by determining $r(K_s,Q_n)$, the Ramsey number of the clique on $s$ vertices and the hypercube on $2^n$ vertices, for every $s \in \N$ and all sufficiently large~$n$. 

The oldest and most famous examples of Ramsey numbers are those involving cliques. Indeed, it was proved by  Erd\H{o}s and Szekeres~\cite{ESz} and by Erd\H{o}s~\cite{E47} over sixty years ago that $2^{k/2} \le R(k) \le 4^k$, where we write $R(k,\ell) = r(K_k,K_\ell)$ and $R(k) = R(k,k)$. Despite extensive attempts (see, e.g.,~\cite{Sp75,T88,Conlon}) these bounds have been improved only very slightly. More progress has been made in the so-called `off-diagonal' case, where it was proved by Ajtai, Koml\'os and Szemer\'edi~\cite{AKSz} and Kim~\cite{Kim} that $R(3,k) = \Theta\big( k^2 / \log k \big)$ (see also~\cite{Sh83}, and the recent improvements in~\cite{BK2,FGM}). However, for every (fixed) $s \ge 4$, even the problem of determining the correct exponent of $k$ in $R(s,k)$ is wide open.

In this paper we shall study the Ramsey numbers of cliques versus sparse graphs, where the situation is somewhat simpler, and much more progress has been made. The systematic study of this problem was initiated in 1983 by Burr and Erd\H{o}s~\cite{BE83}, who conjectured that 
\begin{equation}\label{eq:def:sgood}
r(K_s,H) \, = \, (s - 1)\big( v(H) - 1 \big) + 1
\end{equation}
for every fixed $s$ and all `sufficiently sparse' connected graphs $H$; in particular, for all such $H$ with bounded average degree. The lower bound in~\eqref{eq:def:sgood} is easy to see: simply take a blue Tur\'an graph\footnote{More precisely, take a collection of $s-1$ disjoint red cliques, each of size $n-1$, and note that this colouring contains no connected red graph on $n$ vertices, and no blue graph $G$ of chromatic number $s$.}, and colour the remaining edges red. We remark that this conjecture was partly motivated by a result of Chv\'atal~\cite{Ch}, who had proved a few years earlier that~\eqref{eq:def:sgood} holds for every $s \in \N$ and every tree $H$ on $n$ vertices. 

The Burr-Erd\H{o}s Conjecture was disproved by Brandt~\cite{Br}, who showed that there exist bounded degree graphs~$H$ with $r(K_3,H) > c \cdot v(H)$, for $c$ arbitrarily large. However, the conjecture is known to hold for a large class of graphs with bounded average degree and poor expansion properties. More precisely, Burr and Erd\H{o}s~\cite{BE83} proved it for every graph $H$ of bounded bandwidth\footnote{The bandwidth of a graph $H$ is defined to be the minimum $\ell \in \N$ for which there exists an ordering $v_1,\ldots, v_n$ of the vertices of $H$ such that every edge $v_iv_j$ satisfies $|i-j| \le \ell$.} and Allen, Brightwell and Skokan~\cite{ABS} extended this result to graphs of bandwidth at most $o(\log n / \log\log n)$, and to bounded degree graphs of bandwidth~$o(n)$. Moreover, Nikiforov and Rousseau~\cite{NR} proved that~\eqref{eq:def:sgood} holds for every $O(1)$-degenerate graph which may be disconnected into components of size $o(n)$ by removing at most $n^{1-\eps}$ vertices; as was observed in~\cite{CFLS}, together with the `separator theorem' of Alon, Seymour and Thomas~\cite{AST} this implies that the Burr-Erd\H{o}s Conjecture holds for every sufficiently large graph $H$ which avoids a given minor, and hence for every large planar graph. We remark that in fact the main results of both~\cite{ABS} and~\cite{NR} are considerably more general than those stated above; we refer the reader to the original papers for the exact statements. 

Combining the results of~\cite{ABS} and~\cite{Br}, the situation for graphs of bounded degree is now reasonably well-understood: roughly speaking,~\eqref{eq:def:sgood} holds for those graphs which have poor expansion properties, and fails otherwise. Moreover, by the results of~\cite{NR}, it is known that~\eqref{eq:def:sgood} holds for a large class of graphs with bounded \emph{average} degree (but unbounded maximum degree). In particular, the main theorem of~\cite{NR} resolved all but one of the specific problems mentioned in~\cite{BE83}; in this paper, we resolve the final remaining question.

We shall study the $n$-dimensional hypercube, i.e., the graph with vertex set $\{0,1\}^n$ and edges between pairs of vertices which differ in exactly one coordinate. This important family of graphs appears naturally in many different contexts, and its properties have been extensively-studied, including those relating to Ramsey Theory. For example, it is a long-standing conjecture of Burr and Erd\H{o}s that $r(Q_n) = O(2^n)$, but the best known bounds (see~\cite{Con} and~\cite{FS}) are roughly the square of this function. We note also that $Q_n$ has average degree roughly $\log N$, and bandwidth roughly $N / \sqrt{\log N}$, where $N = 2^n$, and hence it is not covered by the results of either~\cite{ABS} or~\cite{NR}.  

The first important breakthrough in the study of the Ramsey numbers $r(K_s,Q_n)$ was obtained only recently, by Conlon, Fox, Lee and Sudakov~\cite{CFLS}, who gave an upper bound which is within a constant factor (depending on $s$) of the trivial lower bound. We remark that we shall use (a modified version of) their method in Section~\ref{CFLSsec}, below. In~\cite{FGMSS}, the ideas of~\cite{CFLS} were combined with a technique (`embedding in snakes') which utilized the low bandwidth of $Q_n$, to determine $r(K_3,Q_n)$ up to a factor of $1 + o(1)$ as $n \to \infty$. In this paper we shall use a quite different method to completely resolve the problem of Burr and Erd\H{o}s for all sufficiently large $n$. Our main theorem is as follows.

\begin{thm}\label{thm:main}
Let $s \in \N$. Then
\begin{equation}\label{eq:mainthm}
 r(K_s,Q_n) = (s-1)(2^n-1) + 1
\end{equation}
for every $n \ge n_0(s)$.
\end{thm}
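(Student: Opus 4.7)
The lower bound $r(K_s, Q_n) \ge (s-1)(2^n-1) + 1$ is the standard Tur\'an-type construction already sketched in the introduction, so I focus on the matching upper bound. Fix $N = (s-1)(2^n-1) + 1$ and let $\chi$ be a red-blue colouring of $E(K_N)$ with no blue $K_s$; writing $G$ for the red graph, the hypothesis is $\alpha(G) \le s-1$, and the goal is to locate a red copy of $Q_n$ in $G$. As a first simplification I would argue by induction on $s$, assuming $r(K_{s-1}, Q_n) \le (s-2)(2^n-1) + 1$: for any vertex $v$, if $v$ has at least $(s-2)(2^n-1) + 1$ blue neighbours, then applying the inductive hypothesis to the blue neighbourhood produces either a blue $K_{s-1}$ (which extends through $v$ to a blue $K_s$, a contradiction) or the desired red $Q_n$. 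Hence we may assume every vertex has red degree at least $2^n - 1$, i.e.\ $\delta(G) \ge 2^n - 1$.

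Having reduced to the large red-minimum-degree regime, the main argument is a stability dichotomy. Call $\chi$ \emph{$\eps$-extremal} if there exists a partition $V(K_N) = V_1 \cup \cdots \cup V_{s-1}$ with each $|V_i|$ close to $2^n - 1$ and at most $\eps \cdot 2^{2n}$ blue edges inside each $V_i$. In the \emph{non-extremal} case I would apply the strengthened Conlon--Fox--Lee--Sudakov embedding machinery advertised in the introduction (the modification developed in the later CFLS section of the paper): roughly, the failure of $\eps$-extremality produces enough robustness in $G$ --- say, a dense quasi-random bipartite pair, or a large set with strong red expansion --- that $Q_n$ can be embedded directly via dependent random choice combined with a regularity-based counting argument that exploits the bipartite structure of $Q_n$.

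In the \emph{extremal} case, pigeonhole gives some $V_i$ of size $\ge 2^n$ whose induced red subgraph is complete up to a negligible fraction of its edges. Here I would embed $Q_n$ via the recursive product structure $Q_n \cong Q_{n-1} \square K_2$: partition $V_i$ into two halves $A, B$ of equal size, find a red perfect matching between $A$ and $B$ (straightforward since almost all cross-edges are red), and recursively embed $Q_{n-1}$ into each half in such a way that matched vertices of the two copies correspond across the $K_2$ direction. Alternatively, one may greedily embed $Q_n$ along a Hamilton path, exploiting that the maximum degree $n \sim \log |V_i|$ of $Q_n$ is tiny compared with the minimum red-degree inside $V_i$, so that at every step there remain many admissible choices.

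The principal obstacle is achieving the \emph{exact} additive constant $+1$. When $|V_i| = 2^n$ precisely, the embedding has no slack: every vertex of $V_i$ must be used, including those with comparatively many blue neighbours inside $V_i$. Overcoming this requires, firstly, a quantitative stability statement strong enough to show that outside a small exceptional set every vertex of $V_i$ has blue-degree $o(2^n)$ inside $V_i$, and secondly, an \emph{absorbing lemma} for the hypercube providing a small sub-cube inside the embedding that can absorb any remaining problematic vertices. Calibrating the parameters of the extremal and non-extremal branches so that the dichotomy closes exactly at $N = (s-1)(2^n-1) + 1$, rather than at $N + O(1)$, is what distinguishes this result from the weaker bounds of~\cite{CFLS, FGMSS}, and is the central technical difficulty of the argument.
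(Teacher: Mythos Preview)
Your high-level stability dichotomy and the extremal endgame are close to what the paper does, but the non-extremal branch is where the real content lies, and there your proposal is not a proof. Saying ``apply strengthened CFLS machinery'' or ``dependent random choice plus regularity'' does not explain how to go from a blue-$K_s$-free colouring that is \emph{not} close to $s-1$ red cliques to an actual embedding of $Q_n$; indeed, CFLS by itself loses a multiplicative constant depending on $s$, and it is not clear what robustness you extract from non-extremality. The paper's route is quite different and involves two ideas you do not mention. First, a structural decomposition (Proposition~\ref{prop:simon}) of any $K_s$-free blue graph into many disjoint sets each of which is internally \emph{very} red-dense --- this is done by iteratively pulling out maximal families of $K_r$-free sets for $r=2,\dots,s$, with carefully staggered size parameters. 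Second, between these dense red sets one looks for many disjoint red copies of $K_{t,t}$ with $t=\binom{m}{m/2}$; these are greedily completed to red copies of $Q_m$, and one then passes to a \emph{quotient} colouring on these $Q_m$'s which has no blue $K_{R_{2^m}(s)}$. Only then is the CFLS-style embedding (Proposition~\ref{prop:dense_embed}) applied, crucially with the clique size going to infinity. If the $K_{t,t}$ search fails, K\H{o}v\'ari--S\'os--Tur\'an forces the relevant cross-pair to be almost entirely blue, which is what drives the colouring towards extremality.

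Two smaller points. Your induction-on-$s$ reduction to $\delta(G_R)\ge 2^n-1$ is valid but is not used in the paper and does not obviously help with the hard case. And in the extremal branch the paper obtains something stronger than your definition: each $S_j$ is a genuine red \emph{clique} (a blue edge inside any $S_j$ can be greedily extended to a blue $K_s$ across the other parts), with a small leftover set $S_0$ whose vertices each have low blue degree into some $S_j$. This makes the final embedding (Lemma~\ref{lem:finalembed}) straightforward --- put the leftover vertices into well-separated even layers and fill in greedily --- with no need for an absorbing lemma.
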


\enlargethispage{\baselineskip}

We remark that the proof of Theorem~\ref{thm:main} moreover easily generalizes to the case in which the clique $K_s$ is replaced by an arbitrary graph $H$ of chromatic number $s$  (assuming that $n \ge n_0(H)$ is sufficiently large), see Section~\ref{ProofSec} for the details. 
The value of $n_0(s)$ given by our proof grows very quickly, like a tower function of height roughly $s^{2s}$. No doubt this bound is very far from best possible, and it would be interesting to determine the smallest function $n = n(s)$ such that~\eqref{eq:mainthm} holds; note that it fails badly for $n = 2$, since one can easily show that $r(K_s,C_4) \ge s^{3/2 + o(1)} \gg s$. We remind the reader that it is a famous open problem of Erd\H{o}s~\cite{E81} to prove that $r(K_s,C_4) \le s^{2-\eps}$ for all sufficiently large $s \in \N$.  

The strategy we shall use in order to prove Theorem~\ref{thm:main} is roughly as follows. Let~$G$ be a two-coloured complete graph with $(s-1)(2^n-1)+1$ vertices and no blue~$K_s$. We shall almost-partition\footnote{We write `almost-partition' to mean a partition of all but a $o(1)$-fraction of the vertices.} $V(G)$ into a large number of sets which are internally dense in red, and then attempt to join these by a large number of small, disjoint, red complete bipartite graphs. If we find a sufficiently large component of connections of this type, then we shall be able to embed $Q_n$ into $G_R$; otherwise we will be able to almost-partition the vertex set into $s-1$ red cliques. Using this `stability theorem', the result then follows easily.

To be a little more precise, let us consider the case $s = 3$. We first find an integer $a$, a collection of disjoint red cliques of size $2^n / \log \log a$, and a small set $X$, such that every vertex sends at most $2^{n-2^a}$ blue edges to those vertices of $G$ which are not covered by the red cliques, nor by~$X$. To do so, we simply decrease $a$ until we find a collection with the right properties, and note that the blue neighbourhood of any vertex is a red clique, since $G$ contains no blue triangle. We thus obtain an almost-partition into dense red sets. We next choose $m \sim (\log\log a)^{O(1)}$, and attempt to find disjoint red copies of $K_{t,t}$ between the dense red sets, where $t = {m \choose m/2}$. If we fail to do so, then by the well-known theorem of K\"ov\'ari, S\'os and Tur\'an~\cite{KST}, the blue graph between the two sets must be very dense. However, if we succeed sufficiently often, then we shall be able to find a copy of $Q_n$ in $G_R$. Indeed, using the density of the red sets, we may greedily complete our copies of $K_{t,t}$ to red copies of $Q_m$. We can then define an auxiliary two-colouring $H$, by treating each copy of $Q_m$ as a vertex, and colouring a pair blue if any of the matching edges between corresponding vertices is blue. Crucially, this colouring contains no blue clique on $R_{2^m}(3)$ vertices, where $R_r(s)$ denotes the $r$-colour Ramsey number, and is still sufficiently dense in red. By modifying the argument of~\cite{CFLS} (see also~\cite{FGMSS}, where we proved a similar proposition), we can efficiently embed $Q_{n-m}$ in $H_R$, which immediately gives us an embedding of $Q_n$ in $G_R$, as required.

The rest of the paper is organised as follows. In Section~\ref{SimonSec} we generalize the idea above in order to prove a general decomposition theorem for $K_s$-free graphs, which gives us our almost-partition of $V(G)$ into dense red sets. In Section~\ref{CFLSsec} we generalize a result from~\cite{FGMSS}, whose proof is based on the method introduced in~\cite{CFLS}, to show that we can find a red copy of $Q_n$ in a two-coloured complete graph $H$ on $\big( 1 + o(1) \big) 2^n$ vertices, as long as $H_B$ is reasonably sparse and $K_s$-free. It is very important here that $s$ is allowed to go to infinity with $n$, at a rate depending on the density of $H_B$, since we will apply the result to colourings with no blue clique on $R_{2^m}(s)$ vertices. In Section~\ref{MatchingSec} we prove a more general embedding lemma, by finding a maximal collection of disjoint red copies of $K_{t,t}$, as described above. This allows us to find either an embedding of $Q_n$ into a collection of dense red sets, or a dense blue $(s-1)$-partite graph which covers almost all of $V(G)$. Finally, in Section~\ref{ProofSec}, we put the pieces together and prove Theorem~\ref{thm:main}.

\subsection*{Notation} 

\enlargethispage{\baselineskip}

For convenience, we collect here some of the notation which we shall use throughout the paper. If~$G$ is a two-coloured complete graph, then we take the colours to be red and blue, and write $G_R$ and $G_B$ for the graphs formed by the red and blue edge sets respectively. We also write $N_B(u)$ and $d_B(u)$ for the neighbourhood and degree of a vertex in $G_B$, and $e_B(X,Y)$ and $d_B(X,Y) = e_B(X,Y) / |X||Y|$ for the number of blue edges with one endpoint in $X$ and the other in $Y$, and for the density of the graph $G_B[X,Y]$, and similarly for $G_R$.





For each $d \ge 0$ and $\vx=(x_1,\dots, x_d) \in \{0,1\}^d$, let 
\[
Q_{\vx} \, = \, \big\{ \vy = (y_1,y_2,\dots,y_n) \in \{0,1\}^n \,:\, y_i = x_i \mbox{ for each } 1 \le i \le d \big\}
\]
denote the subcube of $Q_n$ consisting of points whose \emph{initial} coordinates agree with $\vx$. We call this an \emph{initial subcube of co-dimension} $d$. Notice that if $Q_{\vx}$ has co-dimension $d$, then every vertex $v\in V(Q_{\vx})$ has exactly $d$ neighbours in $V(Q_n) \setminus V(Q_{\vx})$. We say that disjoint subcubes $Q$ and $Q'$ are \emph{adjacent} if there exist vertices $v\in V(Q)$ and $v'\in V(Q')$ which are adjacent in $Q_n$. Given two vectors $\vx \in \{0,1\}^d$ and $\vz \in \{0,1\}^{d'}$, we write $\vx \sim \vz$ if the subcubes $Q_{\vx}$ and $Q_{\vz}$ are adjacent, and note that~$Q_{\vx}$ and~$Q_{\vz}$ are adjacent if and only if~$\vx$ and~$\vz$ differ in precisely one coordinate in the first $\min\{d,d'\}$ coordinates. 


Throughout the paper, $\log$ denotes $\log_2$, and $\log_{(k)}$ denotes the $k^{th}$ iterated logarithm, so $\log_{(1)}(n)= \log(n)$ and $\log_{(k+1)}(n) = \log\big( \log_{(k)} (n) \big)$. We shall also omit ceiling and floor symbols, and trust that this will cause the reader no confusion.

\section{A structural description of $K_s$-free graphs}\label{SimonSec}

In this section we shall prove the following key proposition, which states that every $K_s$-free graph can be almost-partitioned into sparse sets. 

\begin{prop}\label{prop:simon}
Given $\eps > 0$ and $s\in \N$, there exists $K \in \N$ such that the following holds. Let $N \in \N$, and let $a(0), \ldots, a(K) \in \N$ be a sequence of integers with $a(0) = N$ and
\begin{equation}\label{eq:simonseqconditions}
\qquad a(i+1) \, \le \, \frac{\eps}{8} \cdot a(i) \qquad\mbox{for every } 0 \le i \le K-1.
\end{equation}
Then, for every $K_s$-free graph $G$ on $N$ vertices, there exists $i \in [K-1]$ and a family $\U$ of disjoint vertex sets such that:
\begin{itemize}
\item[$(a)$] $|\bigcup_{U\in \U}U| \ge (1-\eps)N$.\smallskip
\item[$(b)$] $|U| = a(i)$ for every $U \in \U$.\smallskip
\item[$(c)$] $\Delta(G[U]) \le a(i+1)$ for every $U\in \U$.
\end{itemize}
\end{prop}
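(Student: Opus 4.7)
\medskip

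\noindent\textbf{Proof plan.} My approach is induction on $s$, with the constant $K = K(s,\eps)$ growing very rapidly in $s$ (matching the tower-type dependence of $n_0(s)$ flagged in the introduction). The base case $s=2$ is immediate: a $K_2$-free graph is edgeless, so partitioning $V(G)$ arbitrarily into $\lfloor N/a(1) \rfloor$ sets of size $a(1)$ gives pieces $U$ with $\Delta(G[U]) = 0 \le a(2)$, covering all but fewer than $a(1) \le \eps N/8$ vertices; thus $K=2$ suffices.

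For the inductive step, assume the proposition for $K_{s-1}$-free graphs with constant $K_{s-1}$, and let $G$ be $K_s$-free on $N$ vertices. For each $i \in [K-1]$, greedily construct a maximal family $\U_i$ of disjoint $(i,i+1)$-good sets (i.e.\ size $a(i)$, $\Delta(G[U]) \le a(i+1)$), and let $R_i = V(G) \setminus \bigcup_{U \in \U_i} U$ be the residual. If $|R_i| \le \eps N$ for some $i$, then that $i$ together with $\U_i$ finishes the proof, so assume $|R_i| > \eps N$ for every $i \in [K-1]$. By maximality of $\U_i$, no $a(i)$-subset of $R_i$ can itself be good, which forces the set of vertices of $R_i$ with $\deg_{G[R_i]} \le a(i+1)$ to have fewer than $a(i) \le \eps N/8$ elements --- otherwise any $a(i)$-subset of them would form a further good set. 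Hence at least $|R_i| - a(i) > \eps N/2$ vertices $v \in R_i$ satisfy $\deg_{G[R_i]}(v) > a(i+1)$, and for each such $v$ the neighborhood $N_G(v) \cap R_i$ is $K_{s-1}$-free of size greater than $a(i+1)$.

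These $K_{s-1}$-free subgraphs provide the input for the inductive hypothesis. Choosing $i$ so that the tail $a(i+1), a(i+2), \ldots, a(i+K_{s-1})$ fits within the given sequence (possible provided $K$ is sufficiently larger than $K_{s-1}$), I would apply the hypothesis to $H = G[N_G(v) \cap R_i]$ with this tail, producing a family of good sets inside $H$ at some level $j \in \{i+1, \ldots, i+K_{s-1}-1\}$ of the original sequence. Iterating --- repeatedly picking fresh high-degree vertices whose neighborhoods are disjoint from all previously consumed parts of $R_i$, and applying the inductive hypothesis to each --- accumulates good sets in $G$ covering most of $V(G)$. The \emph{main obstacle} is the ensuing level-alignment problem: each invocation of the inductive hypothesis produces good sets of size $\sim a(i+1)$ inside a small subgraph and at possibly several different levels of the original sequence. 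Consolidating all of them into a \emph{single-level} family covering $(1-\eps)N$ of $V(G)$ --- via a pigeonhole on the bounded number of levels, amplified by iterating the whole procedure enough times to drive the residual below $\eps N$ --- is what forces the tower-type growth of $K=K(s,\eps)$ in $s$ and $1/\eps$.
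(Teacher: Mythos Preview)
Your approach has the right instinct---exploit that neighbourhoods in a $K_s$-free graph are $K_{s-1}$-free---but the consolidation step is a genuine gap, not a technicality. The inductive hypothesis only promises good sets at \emph{some} level $\ell_j$, which may vary from one neighbourhood $N_G(v_j)$ to the next, and good sets at distinct levels cannot be reconciled: refining a level-$i$ set $U$ (with $\Delta(G[U])\le a(i+1)$) into pieces of size $a(\ell)$ for $\ell>i$ preserves only the weaker bound $a(i+1)$, not the required $a(\ell+1)$, while merging smaller sets gives no degree control at all. Your ``pigeonhole on the bounded number of levels'' therefore loses a factor of roughly $1/K_{s-1}$ in coverage at any single level, and the proposed ``iteration to drive the residual below $\eps N$'' does not rescue this, because each further round again scatters its output across many levels; the two mechanisms do not compose into $(1-\eps)N$ coverage at one fixed level. (Relatedly, the paper obtains $K=(O(s/\eps))^s$, singly exponential in $s$, not tower-type; your tower prediction is a symptom of the uncontrolled recursion.)

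The paper circumvents the alignment problem by dropping the induction on $s$ in favour of a direct $s$-stage extraction. For $r=2,\ldots,s$ it pulls from what remains a maximal family $\U_r$ of disjoint $K_r$-free sets of size $a(i_r)$; since the previous stage left behind no large $K_{r-1}$-free set, every $U\in\U_r$ automatically satisfies $\Delta(G[U])\le a(j)$ for \emph{every} $j$ up to some much higher index. Thus each $\U_r$ is good not merely for a single pair $(i,i+1)$ but over an entire interval $I_r$ of levels. A pigeonhole argument---applied to the coverage function $f(W,a,r)=\max\{|\bigcup\U|:\U\text{ a disjoint family of $K_r$-free $a$-sets in }W\}$, not to the levels themselves---is then used to choose each $i_r$ so that these intervals nest, $I_2\supseteq\cdots\supseteq I_s$, and hence share a common pair $\{i,i+1\}$. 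This ``interval of good levels'' device is precisely the idea missing from your plan.
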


We first give an overview of the proof and introduce a little more terminology. For convenience, let us fix throughout the rest of the section $\eps > 0$ and $s \in \N$, choose $K$ sufficiently large, and fix a $K_s$-free graph $G$ on $N$ vertices and integers $a(1), \ldots, a(K)$ satisfying~\eqref{eq:simonseqconditions}.

One natural approach to take, already alluded to in the Introduction, would be to choose a maximal collection of disjoint red cliques\footnote{Here we refer to edges of $G$ as `red' and non-edges as `blue'.} of size $b_1$, then a maximal collection of blue triangle-free sets of size $b_2$ in what remains, then a maximal collection of blue $K_4$-free sets of size $b_3$, and so on. These sets have the useful property that, inside each set $B_j$ of size $b_j$, all blue degrees are at most $b_{j-1}$. Indeed, since $B_j$ is $K_j$-free, every blue neighbourhood must be $K_{j-1}$-free, and so would have been chosen at the previous step if it was sufficiently large.

We shall take roughly this approach, with two important modifications. The first is that, since we shall in fact need the degrees inside each set to be much smaller than any of the other sets, we will enforce a `degree gap' by throwing away a small set of vertices at each step. The second is that, instead of choosing the $b_j$s to be a growing sequence (which seems more natural, since the restriction on their blue edges is becoming weaker), we shall choose them to be decreasing \emph{extremely} quickly. 

We now turn to some more formal preparation. The following definition will be useful.

\begin{defn}
Let $0 \le \beta \le 1$ and let $\emptyset \neq I \subseteq [K]$ be an interval. We say that a set $W \subseteq V(G)$ of vertices is {\em $(\beta,I)$-good} if, whenever $i,j \in I$ with $i < j$, there exists a family $\U = \U(i,j)$ of disjoint subsets of $W$ with the following properties:
\begin{itemize}
\item[$(a')$] $\big| \bigcup_{U \in \U} U \big| \ge \beta \cdot |W|$. \smallskip
\item[$(b')$] $|U| = a(i)$ for every $U \in \U$. \smallskip
\item[$(c')$] $\Delta\big( G[U] \big) \le a(j)$ for every $U \in \U$.
\end{itemize}
\end{defn}

Note that Proposition~\ref{prop:simon} is equivalent to the statement that $V(G)$ is $(1-\eps,I)$-good for some interval of the form $I = \{i, i+1\}$. In order to prove this, we shall define two sequences of sets, 
$$V(G) = W_1 \supseteq W_2 \supseteq \cdots \supseteq W_s \qquad \text{and} \qquad U_r \subseteq W_{r-1} \setminus W_r \quad \text{for } r = 2,\ldots,s,$$ 
and a sequence $0 = i_1 < \cdots < i_s \le K$ of integers such that, setting 
$$I_r = \big\{ i_r, \dots, i_{r-1} + k^{s - r + 2} \big\}$$
for each $r \in \{2,\ldots,s\}$ and some fixed $k \in \N$ defined below, we have:
\begin{itemize}
\item[$(i)$] $W_r$ contains no $K_r$-free set of size $a(i_r + k^{s-r+1})$ for every $r \in \{ 2,\dots, s \}$, \smallskip
\item[$(ii)$] $\big| \bigcup_{r=2}^{s} U_r \big| \ge \big( 1 - \eps/2 \big) N$, \smallskip
\item[$(iii)$] $U_r$ is $(1 - \eps/2,I_r)$-good for every $r \in \{ 2,\dots, s \}$, \smallskip
\item[$(iv)$] $I_2 \supseteq \cdots \supseteq I_s$ and $|I_s| \ge 2$.\smallskip
\end{itemize}
Having found such a pair of sequences, we easily obtain a family satisfying~$(a)$,~$(b)$ and~$(c)$ as follows. Indeed, by~$(ii)$-$(iv)$, there exists a cover of all but at most $\eps N$ vertices of $G$ by sets of size $a(i)$ and with maximum internal degrees at most $a(i+1)$, where $\{ i, i+1 \} \subseteq I_s$. The purpose of condition~$(i)$ is to allow us to construct the set $U_{r+1}$; in particular, note that any $K_{r+1}$-free subset of $W_r$ has maximum degree at most $a(i_r + k^{s-r+1})$.

We need one more important definition, which we shall use to choose the sets $U_r$. Given a set $W \subseteq V(G)$ and $a,r \in \N$, define
\[
 f\big( W, a, r \big) \, = \, \max\bigg\{ \Big| \bigcup_{U \in \U} U \Big| \,:\, \U \subseteq W^{(a)} \mbox{ is a family of disjoint $K_r$-free sets of size~$a$} \bigg\},
\]
and say that a family $\U \subset W^{(a)}$ of disjoint $K_r$-free $a$-sets is \emph{$(W,a,r)$-maximal} if 
$$\Big| \bigcup_{U \in \U} U \Big| \, = \, f\big( W,a,r \big).$$
We now turn to the formal proof of the proposition.

\begin{proof}[Proof of Proposition~\ref{prop:simon}]
The proposition is trivial when $s \le 2$ (since $e(G) = 0$), so let us assume that $s \ge 3$.  Set 
$$k \, =\, \left\lceil \frac{8s}{\eps} \right\rceil \qquad \text{and} \qquad K = k^{s},$$
and let $i_1 = 0$ and $W_1 = V(G)$. Trivially, $W_1$ contains no $K_1$-free set of size $a(K) \ge 1$. So let $r \ge 2$, and suppose that we have found an integer $i_{r-1} \in [K]$ and a set $W_{r-1} \subseteq V(G)$ which contains no $K_{r-1}$-free set of size $a(i_{r-1} + k^{s-r+2})$. We will show how to construct $i_r \in [K]$, $W_r \subseteq W_{r-1}$ and $U_r \subseteq W_{r-1} \setminus W_r$ with the required properties. 

Indeed, first define $i_r - i_{r-1}$ to be the least positive multiple of $k^{s-r+1}$ with the property that
\begin{equation}\label{eq:choosing:ir}
f\big( W_{r-1}, a(i_r +k^{s-r+1}), r \big) \, \le \, f\big( W_{r-1}, a(i_r), r \big) \, + \, \frac{\eps N}{4s}.
\end{equation}
In words, if increasing $i_r$ by $k^{s-r+1}$ would cause the size of the largest family of disjoint $K_r$-free $a(i_r)$-sets to increase by a positive fraction of $N$, then we do so; otherwise we stop. Note that 
\begin{equation}\label{eq:irbound}
0 \, \le \, i_r -  i_{r-1} \, \le \, \frac{4s}{\eps} \cdot k^{s-r+1} \, \le \,  \frac{1}{2} \cdot k^{s-r+2},
\end{equation}
since otherwise we would have increased $i_r$ so many times that $f\big( W_{r-1}, a(i_r), r \big) > N$, which is impossible. 

\enlargethispage{\baselineskip}

Now let $\U_r$ be an arbitrary $\big( W_{r-1}, a(i_r), r \big)$-maximal family, and set
$$U_r \, = \, \bigcup_{U \in \U_r} U \qquad \text{and} \qquad I_r \, =\, \big\{ i_r, \dots, i_{r-1} + k^{s - r + 2} \big\}.$$ 

\noindent \textbf{Claim:} $U_r$ is $\big( 1 - \eps / 2, I_r \big)$-good.

\begin{proof}[Proof of claim]
Let $i,j \in I_r$ with $i < j$. In order to obtain a refinement $\U_r(i,j)$ of $\U_r$ which satisfies~$(a')$~$(b')$ and~$(c')$, we simply $(1 - \eps / 2)$-cover each $U \in \U_r$ by disjoint sets of size $a(i)$ arbitrarily. To see that this is possible, simply note that either $i = i_r$, in which case $|U| = a(i)$, or $i > i_r$, in which case $a(i) \le \eps |U| / 8$ by~\eqref{eq:simonseqconditions}. To see that 
$$\Delta\big( G[X] \big) \, \le \, a\big( i_{r-1} + k^{s-r+2} \big) \, \le \, a(j)$$
for each $X \in \U_r(i,j)$, recall that each $Y \in \U_r$ is $K_r$-free, and that $W_{r-1} \supseteq Y \supseteq X$ contains no $K_{r-1}$-free set of size $a(i_{r-1} + k^{s-r+2})$. Thus all internal degrees of $X$ must be at most $a(i_{r-1}+k^{s-r+2})$, as claimed.
\end{proof}

To complete the induction step, we need to remove a small number of additional vertices from $W_{r-1}$, since we shall not be able to control their degrees in the next step. Set $W_r' = W_{r-1} \setminus U_r$ and let $X_r \subseteq W_r'$ be the union of a $\big( W_r', a(i_r + k^{s-r+1}), r \big)$-maximal family. We claim that 
\begin{equation}\label{eq:Xrbound}
|X_r| \,\le \, \frac{\eps N}{4s} + \frac{\eps |U_r|}{8}.
\end{equation}
Indeed, since $U_r$ can (as noted above) be $(1 - \eps / 8)$-covered by $K_r$-free sets of size $a(i_r + k^{s-r+1})$, it follows from~\eqref{eq:choosing:ir} that 
$$|X_r| + \left( 1 - \frac{\eps}{8} \right) |U_r| \, \le \, f\big( W_{r-1}, a(i_r + k^{s-r+1}), r \big) \, \le \, f\big( W_{r-1}, a(i_r), r \big) + \frac{\eps N}{4s} \, =\, |U_r| + \frac{\eps N}{4s}.$$ 
Set 
$$W_r \, = \, W_r' \setminus X_r \, = \, W_{r-1} \setminus \big( U_r \cup X_r \big),$$ 
and observe that $W_r$ contains no $K_r$-free set of size $a(i_r + k^{s-r+1})$, as required.

Finally, let us check that $U_2, \ldots, U_s$ and $I_2, \dots, I_s$ do indeed satisfy $(i)$-$(iv)$. We have already proved~$(i)$ and~$(iii)$ above. To see that $(ii)$ holds, note first that $|W_s| \le \eps N/8$, since $G$ is $K_s$-free and $a(i_s) \le \eps N / 8$, so $U_s$ fails to cover at most this many vertices of $W_{s-1}$. By~\eqref{eq:Xrbound}, it follows that
$$\bigg| V(G) \setminus \bigcup_{r=2}^{s}U_r \bigg| \, \le \, \frac{\eps N}{8} +\, \sum_{r=2}^{s} |X_r| \, \le \, \frac{3\eps N}{8} +\, \frac{\eps}{8}\sum_{r=2}^{s} |U_r| \, \le \, \frac{\eps N}{2}.$$
To verify $(iv)$, simply recall that $I_r$ was defined to be $\{i_r,\ldots,i_{r-1} + k^{s-r+2}\}$, and note that, by~\eqref{eq:irbound}, the sequence $i_r$ is increasing and the sequence $i_r + k^{s-r+1}$ is decreasing.  Finally, $I_s =\{i_s,\dots, i_{s-1}+k^2 \}$ has cardinality at least $k^2 - i_s + i_{s-1} \ge k^2/2 \ge 2$, as required.
\end{proof}

\section{An embedding lemma for dense red colourings}\label{CFLSsec}

In this section we shall adapt the method of Conlon, Fox, Lee and Sudakov~\cite{CFLS} to prove the following proposition. 

\begin{prop}\label{prop:dense_embed}
Given any $\eps > 0$ and $k \in \N$, there exists $n_0 = n_0(\eps,k)$ such that the following holds for every $n \ge n_0$ and every $s \le \log_{(k+1)} (n)$. If $H$ is a two-coloured complete graph on $(1 + \eps)2^n$ vertices with no blue~$K_s$ and
\begin{equation}\label{eq:densebluedegrees}
d_B(u) \, \le \, \frac{2^n}{\log_{(k)} (n)}
\end{equation}
for every $u \in V(H)$, then $Q_n \subset H_R$.
\end{prop}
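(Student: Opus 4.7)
The plan is to adapt the strategy of Conlon, Fox, Lee, and Sudakov~\cite{CFLS}, as refined in~\cite{FGMSS}, embedding $Q_n$ into $H_R$ by decomposing it into initial subcubes of a small co-dimension $d$ and embedding them sequentially. I would choose $d$ so that $d \cdot 2^d \ll \log_{(k)}(n)$; any $d = \Theta\big(\log_{(k+1)}(n)\big)$ will suffice. Partition $V(Q_n)$ into the $2^d$ initial subcubes $\{Q_{\vx}\}_{\vx \in \{0,1\}^d}$, each isomorphic to $Q_{n-d}$, and choose a uniformly random partition $V(H) = \bigsqcup_{\vx} V_{\vx}$ with $|V_{\vx}| = (1+\eps/2) 2^{n-d}$ for each $\vx$. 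By Chernoff and a union bound, we may assume that for every $u \in V(H)$ and every $\vx \in \{0,1\}^d$,
\[
|N_B(u) \cap V_{\vx}| \, \le \, \frac{3 \cdot 2^{n-d}}{\log_{(k)}(n)},
\]
and that each $H[V_{\vx}]$ inherits the $K_s$-free and bounded-blue-degree hypotheses.

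Inside each $V_{\vx}$, the next step is to find a subset $W_{\vx} \subseteq V_{\vx}$ of size at least $(1+\eps/3)2^{n-d}$ with the property that every $S \subseteq W_{\vx}$ of size at most $n$ has common red-neighbourhood $\bigcap_{u \in S} N_R(u) \cap W_{\vx}$ of size at least, say, $(\eps/8)|W_{\vx}|$. Such a set can be produced by a dependent-random-choice / branching argument that exploits the $K_s$-free hypothesis: if a set of vertices each had many blue neighbours into a common small set, then iteratively extracting blue neighbours would build a blue clique whose size grows with the depth of the iteration, so the condition $s \le \log_{(k+1)}(n)$ gives just enough depth for the argument to produce the desired $W_{\vx}$ without accidentally manufacturing a blue $K_s$.

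With the $W_{\vx}$'s in hand, I would process $\vx \in \{0,1\}^d$ in lexicographic order. When embedding $Q_{\vx}$, the already-embedded adjacent subcubes $Q_{\vz}$ (with $\vz < \vx$ and $\vz \sim \vx$) supply, for each vertex $v \in V(Q_{\vx})$, a matching partner $u_{\vz}(v) \in V_{\vz}$. Setting
\[
T(v) \, = \, W_{\vx} \cap \bigcap_{\vz < \vx,\, \vz \sim \vx} N_R\big( u_{\vz}(v) \big),
\]
at most $d$ constraints apply per vertex, so $|T(v)| \ge (1+\eps/4) 2^{n-d}$. Then embed $Q_{n-d} \hookrightarrow W_{\vx}$ greedily in any order of its vertices: at each step the vertex $v$ has at most $n-d$ already-embedded neighbours $w_1, \dots, w_p$, and by the defining property of $W_{\vx}$ the intersection $T(v) \cap \bigcap_i N_R(\phi(w_i))$ is non-empty. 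Repeating this for all $2^d$ subcubes produces the desired red copy of $Q_n$.

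The main obstacle will be the dependent random choice step, where the bookkeeping is delicate: the branching depth needed is comparable to $s$, the set sizes degrade multiplicatively by factors tied to the blue-degree bound at each level, and the output must still retain a $(1+\eps/3)$-fraction of $V_{\vx}$. The interplay between $s \le \log_{(k+1)}(n)$ and the blue-degree bound $2^n / \log_{(k)}(n)$ is precisely what permits $k$ levels of iteration before either the Ramsey-type extraction or the density of $W_{\vx}$ degrades beyond usefulness, and getting the constants to line up across all the levels is the subtle part.
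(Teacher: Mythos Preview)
Your proposal has a genuine gap at the step where you produce $W_{\vx}$. You ask for a subset $W_{\vx} \subseteq V_{\vx}$ that is simultaneously almost all of $V_{\vx}$ --- size at least $(1+\eps/3)2^{n-d}$ out of $(1+\eps/2)2^{n-d}$ --- and has the property that every $n$-subset has common red neighbourhood at least $(\eps/8)|W_{\vx}|$. These two requirements cannot both be met from the hypotheses. Inside $V_{\vx}$ each vertex has blue degree up to about $3\cdot 2^{n-d}/\log_{(k)}(n)$, so an $n$-set may blue-cover as many as $3n\cdot 2^{n-d}/\log_{(k)}(n)$ vertices, and for every $k \ge 1$ this is $\gg 2^{n-d}$. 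The $K_s$-free hypothesis does let you trade size for degree --- that is exactly Lemma~\ref{lemma:finding_dense_subset} of the paper --- but each factor-$2^{d'}$ improvement in blue degree costs a factor $2^{(s-2)d'}$ in size. To push blue degrees down to $o(|W_{\vx}|/n)$ you would need $d'$ of order $\log n$, and with $s$ as large as $\log_{(k+1)}(n)$ the set shrinks by roughly $n^{\log_{(k+1)}(n)}$, destroying the size requirement. There is also a secondary problem: even if your stated property held, a common red neighbourhood of size only $(\eps/8)|W_{\vx}|$ does not guarantee an \emph{unused} image once nearly $2^{n-d}$ vertices have already been placed into $W_{\vx}$.

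The paper resolves this by replacing your one-shot partition with a hierarchy of $k+1$ nested refinements, with co-dimensions $d(1) \ll \cdots \ll d(k+1)$ growing like a tower (Definition~\ref{defn:assignment} and Lemma~\ref{lemma:refining_an_assignment}). At each refinement only a modest degree reduction is needed, so Lemma~\ref{lemma:finding_dense_subset} can be applied with small $d'$ and tolerable shrinkage. The resulting degree condition is \emph{graded}: between cells that first separated at level $r$ the blue-degree bound is governed by $d(r)$, and in the final greedy embedding a vertex has at most $d(r)$ already-embedded neighbours of each type $r$, so the total forbidden set has size $\sum_r d(r)\cdot 2^{n-d(k+1)}/d(r)^{4} = o\big(2^{n-d(k+1)}\big)$. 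Your single-level scheme has no such grading --- all $n-d$ intra-cube neighbours impose the same, too-weak constraint --- and this is precisely why it falls short.
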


We remark that the case $s = 3$ of Proposition~\ref{prop:dense_embed} was proved in~\cite{FGMSS}. As in that paper, our strategy will be roughly as follows: for each $r \in [k+1]$, we shall find an almost-partition of $V(H)$ into $2^{d(r)}$ dense red sets, for some $d(r)$ defined below, and an assignment of initial subcubes of $Q_n$ of co-dimenson $d(r)$ to these sets, such that there are few blue edges between pairs of sets which correspond to adjacent subcubes. We shall find such \emph{level-$r$ assignments} iteratively (see Lemma~\ref{lemma:refining_an_assignment}), refining the almost-partition at level $r$ to obtain that at level $r+1$. Once we have found a level-$(k+1)$ assignment, it will be straightforward (see Lemma~\ref{lemma:embed_into_assignment}) to embed $Q_n$ into $H_R$ greedily.

In order to refine an almost-partition $\A$ at level $r$, we assign a subset $C(\vy) \subset A(\vx) \in \A$ to each $\vy \in Q_{d(r+1)}$ in turn, where $\vx = \vy[d(r)]$ is the $d(r)$-initial segment of $\vy$. (Thus subcubes of a cube are assigned to subsets of the corresponding set.) We first remove from $A(\vx)$ the vertices to which a subcube has already been assigned, and the vertices which send too many blue edges to any set to which a neighbouring cube has already been assigned. By our degree conditions, we do not remove too many vertices in this process. Finally, we use the fact that $H_B$ is $K_s$-free to find a large dense red subset in what remains of $A(\vx)$ (see Lemma~\ref{lemma:finding_dense_subset}). 


We now turn to the details, which are straightforward but somewhat technical. Define
$$d(r) = \left\{
\begin{array}{cl}
\big( \log_{(k+2-r)} n \big)^3 & \mbox{ if } r \in \{1,\ldots,k+2\}, \smallskip \\
0 &\mbox{ otherwise,} 
\end{array}
\right.$$
and note that $d(r+1) \gg s \cdot d(r)$ and $2^{d(r+1)/s} \gg d(r+2)^{5k}$ for every $r \in \{0,\ldots,k\}$. As described above, after~$r$ stages we will have split~$Q_n$ into $2^{d(r)}$ subcubes, each of co-dimension~$d(r)$. Given vectors $\vx,\vx' \in Q_{d(r)}$, let
$$t(\vx,\vx') = \left\{
\begin{array}{cl}
\min \big\{ p \in [r] \,:\, \vx[d(p)] \ne \vx'[d(p)] \big\} & \mbox{ if } \vx \ne \vx', \smallskip \\
r+1 &\mbox{ otherwise,} 
\end{array}
\right.$$
where $\vx[\ell]$ denotes the $\ell$-initial segment of $\vx$. For convenience, let us set 
$$\gamma \, := \, \frac{\eps}{3(k+2)}$$ 
and fix (for the rest of this section) a two-coloured complete graph $H$ with no blue~$K_s$ and
\begin{equation}\label{eq:verticesH} 
v(H) \, = \, \Big( 1 + 3\big( k + 2 \big) \gamma \Big) 2^n 
\end{equation}
where $n \in \N$ is sufficiently large. Let us assume also that $H$ satisfies~\eqref{eq:densebluedegrees}. 

We are now ready for an important definition.

\begin{defn}
\label{defn:assignment}
Given $r \in \{0,\ldots,k+1\}$, a \emph{level-$r$ assignment} of $Q_n$ into $H$ is a collection $\A = \{A(\vx)\}_{\vx \in Q_{d(r)}}$ of disjoint sets of vertices of $H$, satisfying the following conditions:
\begin{enumerate}
 \item[$(a)$] \label{condn:ass_size} $|A(\vx)| =\Big( 1 + 3\big( k + 2 - r \big)\gamma \Big) 2^{n-d(r)}$ for every $\vx \in Q_{d(r)}$.\smallskip  
 \item[$(b)$] \label{condn:ass_degree} For every $\vx,\vx' \in Q_{d(r)}$ with $\vx = \vx'$ or $\vx\sim \vx'$, we have
 \[
  |N_B(v) \cap A(\vx)| \le \frac{2^{n-d(r)}}{d\big( t(\vx,\vx') \big)^{4(k+2-r)}} \qquad\mbox{for all } v \in A(\vx').
 \]
\end{enumerate}
\end{defn}

Note that a level-0 assignment always exists, since $d(0) = 0$ and we may set $A(\vx) = V(H)$ for the unique vertex $\vx \in Q_0$ (i.e., the vector of length zero). Condition~$(a)$ holds by~\eqref{eq:verticesH}, and Condition~$(b)$ holds by~\eqref{eq:densebluedegrees}, since $t(\vx,\vx) = 1$ and $\log_{(k)} n \gg \big( \log_{(k+1)} n \big)^{12(k+2)}$. 

Using this observation, Proposition~\ref{prop:dense_embed} follows easily from the next two lemmas. The first allows to inductively construct level-$r$ assignments for $r \in \{1,\ldots,k+1\}$.

\begin{lemma}\label{lemma:refining_an_assignment}
Let $r \in \{0,\ldots,k\}$. If $\A = \{A(\vx)\}_{\vx \in Q_{d(r)}}$ is a level-$r$ assignment of $Q_n$ into~$H$, then there exists a level-$(r+1)$ assignment $\C = \{C(\vy)\}_{\vy \in Q_{d(r+1)}}$ of $Q_n$ into $H$, with 
$$C(\vy) \subset A\big( \vy[d(r)] \big)$$ 
for every $\vy \in Q_{d(r+1)}$.
\end{lemma}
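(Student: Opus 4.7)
The plan is to define $\C$ by processing the vertices $\vy \in Q_{d(r+1)}$ in some fixed order and assigning, at each stage, a subset $C(\vy) \subset A(\vx)$, where $\vx := \vy[d(r)]$. When processing $\vy$, I start from
\[
W_{\vy} \,:=\, A(\vx) \,\setminus\, \bigg( \bigcup_{\substack{\vy'' \text{ earlier}\\ \vy''[d(r)] = \vx}} C(\vy'') \;\cup\; \bigcup_{\substack{\vy' \text{ earlier}\\ \vy' \sim \vy}} B(\vy') \bigg),
\]
where $B(\vy') := \{v \in A(\vx) : |N_B(v) \cap C(\vy')| > \tau(\vy,\vy')\}$ and $\tau(\vy,\vy') := 2^{n-d(r+1)}/d(t(\vy,\vy'))^{4(k+1-r)}$ is the target level-$(r+1)$ threshold. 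The previously-assigned $C(\vy'')$'s consume at most $(1+3(k+1-r)\gamma)(2^{d(r+1)-d(r)}-1) \cdot 2^{n-d(r+1)}$ vertices, which, by condition~$(a)$ at level~$r$, leaves a slack of $\ge 3\gamma \cdot 2^{n-d(r)}$ above one further copy of $C(\vy)$. For each bad set $B(\vy')$, the level-$r$ version of condition~$(b)$ applied to $(\vx,\vy'[d(r)])$ gives $\sum_{u\in C(\vy')} |N_B(u)\cap A(\vx)| \le |C(\vy')| \cdot 2^{n-d(r)}/d(t(\vx,\vy'[d(r)]))^{4(k+2-r)}$, so dividing by $\tau$ yields $|B(\vy')| \le 2^{n-d(r)}/d(\cdot)^4$ (the exponent $4 = 4(k{+}2{-}r) - 4(k{+}1{-}r)$). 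Summing over the $d(r+1)-d(r)$ neighbours with $\vy'[d(r)] = \vx$ (``Case~I'') and one neighbour per flipped coordinate $i \in [1,d(r)]$ (``Case~II''), and invoking the telescoping $\sum_{p=1}^r d(p)/d(p)^4 + (d(r+1)-d(r))/d(r+1)^4 \ll 1$, the total bad-removal is $\ll \gamma \cdot 2^{n-d(r)}$. Hence $|W_\vy|$ is $\Omega(\gamma \cdot 2^{n-d(r)})$, exponentially larger than $|C(\vy)| \sim 2^{n-d(r+1)}$.

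Since $H_B[W_\vy]$ is still $K_s$-free, I would then invoke Lemma~\ref{lemma:finding_dense_subset} to extract a subset $C(\vy) \subset W_\vy$ of the exact required size $(1+3(k+1-r)\gamma) \cdot 2^{n-d(r+1)}$ in which every vertex has internal blue degree $\le 2^{n-d(r+1)}/d(r+1)^{4(k+1-r)}$; this supplies condition~$(b)$ in the diagonal case $\vy = \vy'$. For distinct adjacent $\vy \sim \vy'$, one of the two ordered-pair bounds of~$(b)$ comes for free from the construction: when the later of the two is processed, the removal of $B(\cdot)$ forces $|N_B(v) \cap C(\text{earlier})| \le \tau$ for every $v$ in the later $C$.

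The main obstacle is the remaining ordered direction. For $\vy'$ processed \emph{before} $\vy$, condition~$(b)$ also demands $|N_B(u) \cap C(\vy)| \le \tau(\vy,\vy')$ for every $u \in C(\vy')$, yet this concerns a set $C(\vy) \subset A(\vx)$ that did not exist when $\vy'$ was processed, and the level-$r$ inclusion bound $|N_B(u) \cap C(\vy)| \le |N_B(u) \cap A(\vx)|$ is too weak by a factor $\sim 2^{d(r+1)-d(r)}/d(\cdot)^4$. I would address this probabilistically, by applying Lemma~\ref{lemma:finding_dense_subset} inside a uniformly random subset of $W_\vy$ of the prescribed size. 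For each fixed $u \in C(\vy')$, $|N_B(u) \cap C(\vy)|$ is then hypergeometric with mean at most $|N_B(u) \cap W_\vy|\cdot |C(\vy)|/|W_\vy| \le 2^{n-d(r+1)}/\bigl(\gamma\cdot d(t(\vx,\vy'[d(r)]))^{4(k+2-r)}\bigr)$, a factor of $d(\cdot)^4/\gamma$ below $\tau(\vy,\vy')$; a Chernoff bound gives failure probability exponentially small in $2^{n-d(r+1)}/\mathrm{polylog}(n)$ per $u$, which comfortably beats a union bound over the $O(d(r+1)\cdot 2^{n-d(r+1)})$ candidate vertices. Iterating the construction over $\vy \in Q_{d(r+1)}$ then completes the proof.
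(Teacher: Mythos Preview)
Your overall plan is sound and tracks the paper's argument closely: order the $\vy$'s, set aside already-used vertices and vertices with excessive blue degree into previously built $C(\vy')$'s, then use $K_s$-freeness (Lemma~\ref{lemma:finding_dense_subset} via Lemma~\ref{lemma:random_subset}) to extract a subset of the right size with low internal blue degree. Your size and bad-set estimates are essentially the paper's.

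The gap is in your handling of the ``remaining ordered direction''. You propose to apply Lemma~\ref{lemma:finding_dense_subset} inside a uniformly random subset $R\subset W_{\vy}$ of the prescribed size and then treat $|N_B(u)\cap C(\vy)|$ as hypergeometric with mean $|N_B(u)\cap W_{\vy}|\cdot|C(\vy)|/|W_{\vy}|$. But these two steps do not compose. Lemma~\ref{lemma:finding_dense_subset} applied to $R$ returns some $Y\subset R$ with $|Y|\ge 2^{-(s-2)d}|R|$, not $|Y|=|R|$, so setting $C(\vy)=Y$ violates condition~$(a)$. If instead you apply Lemma~\ref{lemma:finding_dense_subset} to $W_{\vy}$ first and then take a random subset of its output $Y'$, the hypergeometric mean becomes $|N_B(u)\cap Y'|\cdot|C(\vy)|/|Y'|$, and you only know $|N_B(u)\cap Y'|/|Y'|\le 2^{(s-2)d}\cdot|N_B(u)\cap W_{\vy}|/|W_{\vy}|$. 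The factor $2^{(s-2)d}$ must then be beaten by the residual slack $d(t)^4$, forcing $2^{(s-2)d}\ll d(1)^4$. On the other hand the diagonal case of~$(b)$ at level $r{+}1$ requires internal degree $\le 2^{n-d(r+1)}/d(r{+}2)^{4(k+1-r)}$ (note $t(\vy,\vy)=r{+}2$, not $r{+}1$ as you wrote), i.e.\ $2^{d}\gg d(r{+}2)^{4(k+1-r)}$. Since $d(r{+}2)\gg d(1)$, these two constraints on $d$ are incompatible already for $s=3$.

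The paper avoids this with a deterministic two-pass construction. In the first pass it builds slightly larger sets $C(\vy_j)'$ of size $\big(1+(3(k{+}1{-}r)+1)\gamma\big)2^{n-d(r+1)}$ satisfying a one-sided, slightly stronger condition~$(b')$: for $i\le j$ with $\vy_i\sim\vy_j$, every $v\in C(\vy_j)'$ has $|N_B(v)\cap C(\vy_i)'|\le 2^{n-d(r+1)}/d(t)^{4(k+1-r)+2}$. In the second pass the backward direction follows by double counting: for each $j$ and each later $i$, the set of $v\in C(\vy_j)'$ with $|N_B(v)\cap C(\vy_i)'|> 2^{n-d(r+1)}/d(t)^{4(k+1-r)}$ has size at most $|C(\vy_i)'|/d(t)^2$, because $e_B\big(C(\vy_i)',C(\vy_j)'\big)$ is already bounded by~$(b')$; summing over neighbours removes at most $\gamma\cdot 2^{n-d(r+1)}$ vertices. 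The extra $+2$ in the exponent of~$(b')$ is exactly the slack that makes this Markov-type argument go through, and no randomness is needed for this direction.
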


The second lemma allows us to embed $Q_n$ into $H_R$ via a level-$(k+1)$ assignment. 

\begin{lemma}
\label{lemma:embed_into_assignment}
If $\A = \{A(\vx)\}_{\vx \in Q_{d(k+1)}}$ is a level-$(k+1)$ assignment of $Q_n$ into $H$, then there exists an embedding $\varphi$ of~$Q_n$ into~$H_R$, such that $\varphi(Q_{\vx}) \subset A(\vx)$ for every $\vx \in Q_{d(k+1)}$.
\end{lemma}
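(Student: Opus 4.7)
The plan is to embed $Q_n$ greedily into $H_R$, vertex by vertex. I would order the vertices of $Q_n$ arbitrarily as $\vy_1,\ldots,\vy_{2^n}$, and at step $j$ attempt to place $\varphi(\vy_j)$ in the set $A(\vx_j)$, where $\vx_j = \vy_j[d(k+1)]$: specifically, $\varphi(\vy_j)$ should be a vertex of $A(\vx_j)$ which has not yet been used by $\varphi$ and which is red-adjacent in $H$ to every $\varphi(\vy')$ such that $\vy'$ is a neighbor of $\vy_j$ in $Q_n$ already embedded. Once this succeeds for every $j$, the inclusion $\varphi(Q_{\vx}) \subset A(\vx)$ is automatic from the placement rule, and $\varphi$ is a red embedding of $Q_n$ by construction.

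The only nontrivial point is showing that a valid choice always exists. Write $F_j \subset A(\vx_j)$ for the forbidden set at step $j$. Since only $|Q_{\vx_j}| = 2^{n-d(k+1)}$ vertices of $Q_n$ are ever placed in $A(\vx_j)$, the used-vertex contribution to $|F_j|$ is at most $2^{n-d(k+1)}$. For the blue-neighbourhood contribution, I would split the $n$ neighbours $\vy'$ of $\vy_j$ in $Q_n$ according to the index $i$ of the differing coordinate and apply Condition~$(b)$ of Definition~\ref{defn:assignment}. If $i > d(k+1)$ then $\vy'[d(k+1)] = \vx_j$ and $t(\vx_j,\vx_j) = k+2$, so the per-neighbour bound is $2^{n-d(k+1)}/d(k+2)^{4}$. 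If instead $i \in (d(p-1), d(p)]$ for some $p \in [k+1]$, then $\vy'[d(k+1)]$ is adjacent to $\vx_j$ in $Q_{d(k+1)}$ with $t(\vx_j,\vy'[d(k+1)]) = p$, so the per-neighbour bound is $2^{n-d(k+1)}/d(p)^{4}$; there are at most $d(p)$ such $i$, so the combined contribution from this band is at most $2^{n-d(k+1)}/d(p)^{3}$.

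Summing, the blue-neighbourhood contribution to $|F_j|$ is at most
\[
 \frac{n \cdot 2^{n-d(k+1)}}{d(k+2)^{4}} \, + \, \sum_{p=1}^{k+1} \frac{2^{n-d(k+1)}}{d(p)^{3}} \, \le \, \frac{2(k+2) \cdot 2^{n-d(k+1)}}{\bigl( \log_{(k+1)} n \bigr)^{9}},
\]
which is smaller than $\gamma \cdot 2^{n-d(k+1)}$ once $n \ge n_0(\eps,k)$. Therefore $|F_j| < (1+\gamma)\,2^{n-d(k+1)} < |A(\vx_j)|$, and the greedy step succeeds. The main obstacle is really just the bookkeeping of this sum: the point is that the exponent~$4$ in Condition~$(b)$ is exactly what allows us to absorb the $d(p)$ neighbours per band while still leaving a factor of $d(p)^{3}$ in the denominator, so that the sum converges and is dominated by the slowest-growing term coming from $d(1) = (\log_{(k+1)} n)^{3}$.
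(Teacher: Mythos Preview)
Your proposal is correct and follows essentially the same greedy approach as the paper: order the vertices of $Q_n$ arbitrarily, place each in its designated set $A(\vx_j)$ avoiding used vertices and the blue neighbourhoods of already-embedded $Q_n$-neighbours, and bound the forbidden set by banding neighbours according to $t(\cdot,\cdot)$ and summing $d(p)\cdot 2^{n-d(k+1)}/d(p)^4$. The only cosmetic difference is that you treat the case $i > d(k+1)$ separately with the multiplier $n$ rather than $d(k+2)$, but since $d(k+2) = n^3$ this is immaterial and your final inequality $|F_j| < (1+\gamma)2^{n-d(k+1)} < (1+3\gamma)2^{n-d(k+1)} = |A(\vx_j)|$ goes through exactly as in the paper.
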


Lemma~\ref{lemma:embed_into_assignment} follows from a straightforward greedy embedding. We shall prove it first, as a gentle warm-up for our main task, proving Lemma~\ref{lemma:refining_an_assignment}. 

\begin{proof}[Proof of Lemma~\ref{lemma:embed_into_assignment}]
Let $\vz_1\,\ldots,\vz_{2^n}$ be an arbitrary ordering of the vertices of $Q_n$. We claim that for each $j \in [2^n]$, there exists a vertex $v(j) \in A\big( \vz_j[d(k+1)] \big)$ such that 
$$v(j) \not\in \bigcup_{\vy \in T(j)} N_B\big( \varphi(\vy) \big),$$
where $T(j)$ denotes the collection of $Q_n$-neighbours of $\vz_j$ which have already been embedded.\footnote{That is, $T(j) = \{ \vz_i : i < j \textup{ and } \vz_i \sim \vz_j \}$.} If such a vertex exists for each $j$, then setting $\varphi(\vz_j) = v(j)$ gives the desired embedding. 

To see that such a vertex $v(j)$ exists, let~$\vy \sim \vz_j$ be a $Q_n$-neighbour of $\vz_j$ which has already been embedded as $\varphi(\vy)$. By Condition~$(b)$ in Definition~\ref{defn:assignment}, we have
\[
 \big| N_B\big( \varphi(\vy) \big) \cap A\big( \vz_j[d(k+1)] \big) \big| \, \le \, \frac{2^{n-d(k+1)}}{d\big( t(\vy[d(k+1)],\vz_j[d(k+1)] ) \big)^4}.
\]
Now, simply observe that $t\big( \vy[d(k+1)],\vz_j[d(k+1)] \big) \in [k+2]$ for every $\vy,\vz_j \in Q_n$, and that there are at most $d(r)$ vertices~$\vy \in Q_n$ with $\vy \sim \vz_j$ and $t\big( \vy[d(k+1)],\vz_j[d(k+1)] \big) = r$ for every $r \in [k+2]$, since $\vy$ and $\vz_j$ differ on exactly one of the first $d(r)$ coordinates. Hence
\begin{equation}\label{eq:embed:doublecounting}
 \sum_{\vy \in T(j)} \big| N_B\big( \varphi(\vy) \big) \cap A\big( \vz_j[d(k+1)] \big) \big| \, \le \, \sum_{r = 1}^{k+2} d(r) \cdot \frac{2^{n-d(k+1)}}{d(r)^4} \, \le \, \gamma \cdot 2^{n-d(k+1)},
\end{equation}
where the last inequality holds since $d(r) \gg 1$ as $n \to \infty$ for every $r \in [k+2]$. 

Finally, since there are at least 
$$\big| A\big( \vz_j[d(k+1)] \big) \big| - 2^{n-d(k+1)} \,>\, \gamma \cdot 2^{n-d(k+1)}$$ 
vertices in $A\big( \vz_j[d(k+1)] \big)$ which are not already in the image of~$\varphi$, there must exist a vertex $v(j)$ as claimed.
\end{proof}

We now turn to the proof of Lemma~\ref{lemma:refining_an_assignment}. We begin with a simple but key lemma, which follows from the fact that~$H_B$ is $K_s$-free. 

\begin{lemma}\label{lemma:finding_dense_subset}
For every $d \ge 0$ and $X \subset V(H)$, there exists a set $Y \subset X$ such that
 \begin{equation}\label{eq:Yconditions}
  |Y| \ge 2^{-(s-2)d}|X| \qquad \text{and} \qquad |N_B(v) \cap Y| \le 2^{-d}|Y|
 \end{equation}
for every $v \in Y$.
\end{lemma}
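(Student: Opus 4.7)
The plan is to build $Y$ by an iterative peeling algorithm. I will set $X_0 := X$, and at each step $i \ge 1$ ask whether there exists a vertex $v_i \in X_{i-1}$ with $|N_B(v_i) \cap X_{i-1}| > 2^{-d}|X_{i-1}|$; if so, I replace $X_{i-1}$ by $X_i := N_B(v_i) \cap X_{i-1}$ and continue, and otherwise I stop and output $Y := X_{i-1}$. By construction each successful step satisfies $|X_i| > 2^{-d}|X_{i-1}|$, so after $j$ such steps $|X_j| > 2^{-jd}|X|$.

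The key Ramsey observation is that the selected vertices $v_1, \ldots, v_i$ always form a blue clique, because by construction $X_i \subseteq N_B(v_1) \cap \cdots \cap N_B(v_i)$, and hence the next choice $v_{i+1} \in X_i$ is blue-adjacent to each previously selected vertex. This has a crucial consequence at step $s-2$: if the algorithm ever completes $s-2$ successful iterations, then $X_{s-2}$ lies in the common blue neighborhood of the blue $K_{s-2}$ on $\{v_1, \ldots, v_{s-2}\}$, and any blue edge $uv$ inside $X_{s-2}$ would extend this clique to a blue $K_s$ on $\{v_1, \ldots, v_{s-2}, u, v\}$, contradicting $K_s$-freeness of $H_B$. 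Hence $X_{s-2}$ is automatically blue-independent.

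I therefore cap the procedure at $s-2$ steps and set $Y$ to be the set I end on. If the algorithm halts early at some $i \le s-2$ because no large-blue-degree vertex exists, the stopping rule directly gives $|N_B(v) \cap Y| \le 2^{-d}|Y|$ for every $v \in Y$; if it instead runs the full $s-2$ steps, the same inequality holds trivially because $Y = X_{s-2}$ is blue-independent. In either case at most $s-2$ shrinking steps occurred, giving $|Y| \ge 2^{-(s-2)d}|X|$ as required. I do not expect any real obstacle here; the only sanity-check is $s = 2$, where $H_B$ is edgeless and $Y := X$ works vacuously after $0$ iterations.
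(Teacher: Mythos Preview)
Your proof is correct and follows essentially the same iterative peeling argument as the paper. The only cosmetic difference is in the invariant you track: the paper maintains that $H_B[X_i]$ is $K_{s-i}$-free (decreasing the clique number by one at each step), whereas you equivalently observe that the chosen vertices $v_1,\ldots,v_i$ form a blue clique with $X_i$ in their common blue neighbourhood; both formulations force $X_{s-2}$ to be blue-independent and yield the same size bound.
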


\begin{proof}
Let $0 \le i \le s-3$, and suppose that we have found a set $X_i \subset X$ such that
\begin{equation}\label{eq:findingYconditions}
H_B[X_i] \text{ is  $K_{s-i}$-free} \qquad \text{and} \qquad |X_i| \ge 2^{-id}|X|.
\end{equation}
When $i = 0$ this is clearly possible (simply set $X_0 = X$). We claim that either $Y = X_i$ satisfies~\eqref{eq:Yconditions}, or there exists a set $X_{i+1} \subset X_i$ satisfying~\eqref{eq:findingYconditions} for $i+1$. Indeed, if~\eqref{eq:Yconditions} does not hold with $Y = X_i$, then there exists $v \in X_i$ with $|N_B(v) \cap X_i| > 2^{-d}|X_i|$. But in that case we can set $X_{i+1} = N_B(v) \cap X_i$, which is $K_{s-i-1}$-free since $H_B[X_i]$ is $K_{s-i}$-free. 

Hence we obtain either a set $Y$ as required, or a set $X_{s-2}$ satisfying~\eqref{eq:findingYconditions} for $i = s - 2$. But in the latter case $H_B[X_{s-2}]$ is $K_2$-free, i.e., $X_{s-2}$ is a red clique, so we may set $Y=X_{s-2}$. 
\end{proof}


By choosing a random subset, we 
can moreover guarantee that the set $Y$ has whatever size we desire. We record this simple observation as the following lemma.

\begin{lemma}\label{lemma:random_subset}
Let $a,d \in \N$ and $X \subset V(H)$. If $\log|X| \cdot 2^{d+3} \le a \le 2^{-(s-2)d}|X|$, then there exists a set $Y \subset X$ such that
 \begin{equation}\label{eq:Yconditionsexact}
  |Y| = a \qquad \text{and} \qquad |N_B(v) \cap Y| \le 2^{-d+1}|Y|
 \end{equation}
for every $v \in Y$.
\end{lemma}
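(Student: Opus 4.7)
The plan is to combine Lemma~\ref{lemma:finding_dense_subset} with a random-subset argument. First I would apply Lemma~\ref{lemma:finding_dense_subset} with the given parameter $d$ to obtain a set $Y_0 \subset X$ with
\[
|Y_0| \, \ge \, 2^{-(s-2)d}|X| \qquad \text{and} \qquad |N_B(v) \cap Y_0| \, \le \, 2^{-d}|Y_0|
\]
for every $v \in Y_0$. The upper bound $a \le 2^{-(s-2)d}|X|$ from the hypothesis guarantees that $|Y_0| \ge a$, so we have room to choose a subset of size exactly $a$ inside $Y_0$.

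Next I would pick $Y \subset Y_0$ uniformly at random among all subsets of size $a$. For each fixed $v \in Y_0$, the random variable $|N_B(v) \cap Y|$ is hypergeometric with mean
\[
\mu_v \, = \, a \cdot \frac{|N_B(v) \cap Y_0|}{|Y_0|} \, \le \, 2^{-d} a.
\]
Since hypergeometric variables satisfy the same upper-tail Chernoff bounds as binomials, and since $2^{-d+1} a \ge 2\mu_v$, one obtains an estimate of the form
\[
\Pr\big[ |N_B(v) \cap Y| \, > \, 2^{-d+1} a \big] \, \le \, \exp\big( - 2^{-d} a / 3 \big).
\]
The lower bound $a \ge 2^{d+3} \log |X|$ in the hypothesis gives $2^{-d}a \ge 8 \log |X|$, so the right-hand side is at most $|X|^{-c}$ for a constant $c > 1$. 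A union bound over the at most $|X|$ vertices $v \in Y_0$ then shows that with positive probability no $v \in Y_0$ has more than $2^{-d+1} a$ blue neighbours in $Y$. Any such $Y$ satisfies both conditions in~\eqref{eq:Yconditionsexact}, because $Y \subset Y_0$ means the bound holds a fortiori for every $v \in Y$.

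There is essentially no real obstacle here; the only point that requires a moment's care is verifying the Chernoff estimate uniformly in $\mu_v$ (which may be much smaller than $2^{-d}a$). This is handled by the standard observation that, for $X$ hypergeometric with mean $\mu'_v \le \mu_v$ and for any $t \ge 2\mu_v$, one has $\Pr[X \ge t] \le \exp\big( -(t-\mu'_v)/3 \big) \le \exp\big( -2^{-d}a/3 \big)$, which is precisely what we used above.
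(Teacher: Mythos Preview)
Your proof is correct and follows essentially the same approach as the paper: apply Lemma~\ref{lemma:finding_dense_subset} to obtain a dense-red set of size at least $a$, then pass to a uniform random subset of size exactly $a$ and use concentration plus a union bound. The only cosmetic difference is that the paper transfers to the binomial model via Pittel's inequality before applying Chernoff, whereas you apply the hypergeometric Chernoff bound directly; both are standard and yield the same conclusion.
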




\begin{proof}
Let $Y'$ be the set obtained via Lemma~\ref{lemma:finding_dense_subset}, and let $Y$ be a uniformly chosen random subset of $Y'$ of size $a$. Set $p = a / |Y'|$ and let $B \sim \Bin\big( 2^{-d} |Y'|, p \big)$ denote the Binomial random variable. Using the inequalities of Pittel\footnote{Let $m,n \in \N$, and set $p = m/n$. For any property $\Q$ on $[n]$ we have $\Pr\big( \Q \text{ holds for a random $m$-set} \big) \le 3\sqrt{m} \cdot  \Pr\big( \Q \text{ holds for a random $p$-subset of $[n]$} \big)$.} and Chernoff (see, e.g.,~\cite{AS}), it follows that the expected number of vertices $v \in Y$ with more than $2^{-d+1}|Y|$ blue neighbours in $Y$ is at most 
$$|Y'|^{3/2} \cdot \Pr\Big( B > 2^{-d+1}a = 2 \cdot \Ex[B] \Big) \, \le \, |Y'|^{3/2} \cdot e^{-\Ex[B]/4} \, < \, 1,$$
since $\Ex[B] \ge 8 \log |Y'|$. Hence there must exist a set $Y$ with no such vertices, as required.
\end{proof}


We now show how to assign the subcubes of $Q_n$ to the dense red sets found in Lemma~\ref{lemma:finding_dense_subset}.

\begin{proof}[Proof of Lemma~\ref{lemma:refining_an_assignment}]
Let $r \in \{0,\ldots,k\}$, and let $\A = \{A(\vx)\}_{\vx \in Q_{d(r)}}$ be a level-$r$ assignment of $Q_n$ into $H$. Thus the $A(\vx)$ are disjoint sets of vertices of $H$, satisfying Conditions~$(a)$ and~$(b)$ of Definition~\ref{defn:assignment}. 
Our aim is to construct a level-$(r+1)$ assignment $\C = \{C(\vy)\}_{\vy \in Q_{d(r+1)}}$ of $Q_n$ into $H$, with 
$$C(\vy) \subset A\big( \vy[d(r)] \big)$$ 
for every $\vy \in Q_{d(r+1)}$.

Let $\vy_1,\ldots,\vy_{2^{d(r+1)}}$ be an arbitrary ordering of the vectors of $Q_{d(r+1)}$. Our main task will be to construct, one by one, disjoint sets
\[
 C(\vy_j)' \subset A\big( \vy_j[d(r)] \big)
\]
satisfying the following two conditions:
 \begin{enumerate}
\item[$(a')$] $|C(\vy_j)'| = \Big( 1 + \big( 3(k + 1 - r) + 1 \big)\gamma \Big) 2^{n-d(r+1)}$ for every $1 \le j \le 2^{d(r+1)}$.\smallskip
\item[$(b')$] For every $i \le j$ with $\vy_i = \vy_j$ or $\vy_i \sim \vy_j$, and every $v \in C(\vy_j)'$,
$$\big| N_B(v) \cap C(\vy_i)' \big| \, \le \, \frac{2^{n-d(r+1)}}{d\big( t(\vy_i,\vy_j) \big)^{4(k+1-r)+2}}.$$
\end{enumerate}
Once sets $C(\vy_j)'$ satisfying~$(a')$ and~$(b')$ have been found, it will be straightforward to find slightly smaller sets $C(\vy_j) \subset C(\vy_j)'$ satisfying the slightly stronger Condition~$(b)$. 

\smallskip
In order to find sets $C(\vy_j)'$ as described above, let $j \ge 1$ and suppose that we have already found sets $C(\vy_1)',\ldots,C(\vy_{j-1})'$ satisfying~$(a')$ and~$(b')$. We shall find $C(\vy_j)'$ inside $U(\vy_j) \subset A\big(\vy_j[d(r)] \big)$, where
\[
 U(\vy_j) := A\big(\vy_j[d(r)] \big) \setminus \bigcup_{i<j} C(\vy_i)',
\]
denotes the set of not yet occupied vertices in $A\big(\vy_j[d(r)] \big)$. We claim that 
\begin{equation}\label{eq:Ubound}
|U(\vy_j)| \ge 2\gamma \cdot 2^{n-d(r)}.
\end{equation} 
To see this simply note that 
$$\big| \big\{ i < j : C(\vy_i)' \subset A\big(\vy_j[d(r)] \big) \big\} \big| \, = \, \big| \big\{ i < j : \vy_i[d(r)] = \vy_j[d(r)] \big\} \big| \, < \, 2^{d(r+1)-d(r)},$$
and that
\[
\Big( 1 + 3(k+2-r)\gamma \Big) 2^{n-d(r)} \,-\, 2^{d(r+1)-d(r)} \Big( 1 + \big( 3(k+1-r)+1 \big) \gamma \Big) 2^{n-d(r+1)} \, = \, 2\gamma \cdot 2^{n-d(r)}.
\]
The bound~\eqref{eq:Ubound} now follows immediately. 

In order to find $C(\vy_j)' \subset U(\vy_j)$ satisfying condition~$(b')$, we must remove from~$U(\vy_j)$ all vertices which have high blue degree to some already-chosen set~$C(\vy_i)'$ with $\vy_i \sim \vy_j$. Let 
$$T(j) \, = \, \Big\{ 1 \le i \le j-1 \,:\, \vy_i \sim \vy_j \Big\}$$ 
denote the collection of indices of already-assigned $Q_{d(r+1)}$-neighbours of~$\vy_j$, and for each $i \in T(j)$, let
\begin{equation}\label{def:D}
 D_i(\vy_j) \, = \, \bigg\{ v \in U(\vy_j) \,:\, |N_B(v) \cap C(\vy_i)'| \ge \frac{2^{n-d(r+1)}}{d\big( t(\vy_i,\vy_j) \big)^{4(k+1-r)+2}} \bigg\}.
\end{equation}
We claim that
\begin{equation}\label{eq:Dbound}
 |D_i(\vy_j)| \, \le \, \frac{2^{d(r+1)-d(r)}}{d\big( t(\vy_i,\vy_j) \big)^2} \cdot \big| C(\vy_i)' \big| \, \le \, \frac{2^{n-d(r)+1}}{d\big( t(\vy_i,\vy_j) \big)^2}.
\end{equation}
Indeed, since $\vy_i \sim \vy_j$, it follows that either $\vy_i[d(r)]  = \vy_j[d(r)]$ or $\vy_i[d(r)] \sim \vy_j[d(r)]$, and hence Condition~$(b)$ in Definition~\ref{defn:assignment} holds, with $\vx = \vy_j[d(r)]$, for every $v \in C(\vy_i)'$. Thus, by the definition~\eqref{def:D} of $D_i(\vy_j)$ and the fact that $D_i(\vy_j) \subset A\big( \vy_j[d(r)] \big)$, we have
\begin{align*}
& \big| D_i(\vy_j) \big| \cdot \frac{2^{n-d(r+1)}}{d\big( t(\vy_i,\vy_j) \big)^{4(k+1-r)+2}} \, \le \, e_B\big( C(\vy_i)', D_i(\vy_j) \big) \\
& \hspace{2.5cm} \le \, e_B\big( C(\vy_i)', A(\vy_j[d(r)]) \big) \, \le \, \big| C(\vy_i)' \big| \cdot \frac{2^{n-d(r)}}{d\big( t(\vy_i,\vy_j) \big)^{4(k+1-r)+4}}.
\end{align*}
The first bound in~\eqref{eq:Dbound} follows immediately; the second follows by Condition~$(a')$. 

Now, set
$$X \, = \, U(\vy_j) \setminus \bigcup_{i\in T(j)} D_i(\vy_j).$$
We claim that 
\begin{equation}\label{eq:Xproperties}
|X| \ge \gamma \cdot 2^{n-d(r)} \qquad \text{and} \qquad \big| N_B(v) \cap C(\vy_i)' \big| \, \le \, \frac{2^{n-d(r+1)}}{d\big( t(\vy_i,\vy_j) \big)^{4(k+1-r)+2}}
\end{equation}
for every $i < j$ such that $\vy_i \sim \vy_j$ and every $v \in X$. To prove the bound on $|X|$, recall from~\eqref{eq:Ubound} that $|U(\vy_j)| \ge 2\gamma \cdot 2^{n-d(r)}$, and observe (cf.~\eqref{eq:embed:doublecounting}) that, by~\eqref{eq:Dbound}, we have
\[
 \sum_{i \in T(j)} \big| D_i(\vy_j) \big| \, \le \, \sum_{p=1}^{r+1} d(p) \cdot \frac{2^{n-d(r)+1}}{d(p)^2} \, \le \, \gamma \cdot 2^{n-d(r)}.
\]
Indeed, note that $t(\vy_i,\vy_j) \in [r+1]$ for every~$i \in T(j)$, and that there are at most $d(p)$ elements~$i\in T(j)$ with $\vy_i\sim \vy_j$ and $t(\vy_i,\vy_j) = p$, for every $p \in [r+1]$. Once again, the final inequality holds since~$n$ is sufficiently large and $d(p) \to \infty$ as $n \to \infty$ for every $p \in [r+1]$. The second inequality in~\eqref{eq:Xproperties} follows directly from the definitions of $X$, $D_i(\vy_j)$ and $T(j)$. 

Finally, we apply Lemma~\ref{lemma:random_subset} to $X$, with $a = 2^{n-d(r+1)+1}$ and $d = d(r+1)/s$, to obtain a set $Y \subset X$ with
\begin{equation}\label{eq:Yproperties}
|Y| \, = \, 2^{n-d(r+1)+1} \qquad \text{and} \qquad \big| N_B(v) \cap Y \big| \, \le \, \frac{2^{n-d(r+1)}}{d(t(\vy_j,\vy_j))^{4(k+1-r)+2}}
\end{equation}
for every $v \in Y$. To see that these values of $a$ and $d$ are compatible, simply note that 
$$|Y| \, = \, 2^{n-d(r+1) + 1} \, \ll \, \gamma \cdot 2^{n-d(r)-(s-2)d(r+1)/s} \, \le \, 2^{-(s-2)d}|X|,$$
since $d(r+1) / s \gg d(r)$. To obtain the bound on $|N_B(v) \cap Y|$, observe that
$$\big| N_B(v) \cap Y \big| \, \le \, 2^{-d+1} |Y| \, \ll \, \frac{2^{n-d(r+1)}}{d(r+2)^{4(k+1-r)+2}} \, = \, \frac{2^{n-d(r+1)}}{d\big( t(\vy_j,\vy_j) \big)^{4(k+1-r)+2}}$$
since $2^{d(r+1)/s} \gg d(r+2)^{5k}$. We may now choose $C(\vy_j)'$ to be an arbitrary subset of $Y$ of the correct size, since every such set automatically satisfies conditions~$(a')$ and~$(b')$.\smallskip

We have now constructed sets $C(\vy_j)'$ satisfying conditions~$(a')$ and~$(b')$; it only remains to find subsets $C(\vy_j) \subset C(\vy_j)'$ satisfying conditions~$(a)$ and~$(b)$. For each $i<j$ with $\vy_i \sim \vy_j$, condition~$(b')$ implies that every vertex in $C(\vy_j)'$ sends few blue edges to $C(\vy_i)'$. However there may be vertices in $C(\vy_i)'$ that send many blue edges to $C(\vy_j)'$, which we must remove. The proof that there are few such vertices is an easy exercise in double-counting, exactly as above. To spell it out, let 
$$\hat{T}(j) \, = \, \Big\{ j+1 \le i \le 2^{d(r+1)} \,:\, \vy_i \sim \vy_j \Big\}$$ 
denote the collection of indices of $Q_{d(r+1)}$-neighbours of~$\vy_j$ which were assigned later in the process than~$\vy_j$, and for each $i \in \hat{T}(j)$, let
\begin{equation}\label{def:Dhat}
 \hat{D}_i(\vy_j) \, = \, \bigg\{ v \in C(\vy_j)' \,:\, |N_B(v) \cap C(\vy_i)'| \ge \frac{2^{n-d(r+1)}}{d\big( t(\vy_i,\vy_j) \big)^{4(k+1-r)}} \bigg\}.
\end{equation}
We claim that
\begin{equation}\label{eq:Dhatbound}
 |\hat{D}_i(\vy_j)| \, \le \, \frac{\big| C(\vy_i)' \big|}{d\big( t(\vy_i,\vy_j) \big)^2}  \, \le \, \frac{2^{n-d(r+1)+1}}{d\big( t(\vy_i,\vy_j) \big)^2}.
\end{equation}
Indeed, by~\eqref{def:Dhat}, Condition~$(b')$, and the fact that $\hat{D}_i(\vy_j) \subset C(\vy_j)'$, we have
\begin{align*}
& \big| \hat{D}_i(\vy_j) \big| \cdot \frac{2^{n-d(r+1)}}{d\big( t(\vy_i,\vy_j) \big)^{4(k+1-r)}} \, \le \, e_B\big( C(\vy_i)', \hat{D}_i(\vy_j) \big) \\
& \hspace{2.5cm} \le \, e_B\big( C(\vy_i)', C(\vy_j)' \big) \, \le \, \big| C(\vy_i)' \big| \cdot \frac{2^{n-d(r+1)}}{d\big( t(\vy_i,\vy_j) \big)^{4(k+1-r)+2}}.
\end{align*}
as claimed. Finally, note that 
\[
 \sum_{i \in \hat{T}(j)} \big| \hat{D}_i(\vy_j) \big| \, \le \, \sum_{p=1}^{r+1} d(p) \cdot \frac{2^{n-d(r+1)+1}}{d(p)^2} \, \le \, \gamma \cdot 2^{n-d(r+1)}, 
\]
since there are at most $d(p)$ elements~$i\in \hat{T}(j)$ with $\vy_i\sim \vy_j$ and $t(\vy_i,\vy_j) = p$. 

Using Condition~$(a')$, it follows that
$$\bigg| C(\vy_j)' \setminus \bigcup_{i \in \hat{T}(j)} \hat{D}_i(\vy_j) \bigg| \, \ge \, \Big( 1 + 3\big( k+1-r \big) \gamma \Big)2^{n-d(r+1)}.$$ 
Hence, taking~$C(\vy_j)$ to be any subset of the correct size, and repeating the above for each $1 \le j \le 2^{d(r+1)}$ in turn,  we obtain a level-$(r+1)$ assignment, as required.
\end{proof}

We can now easily deduce Proposition~\ref{prop:dense_embed} from Lemmas~\ref{lemma:refining_an_assignment} and~\ref{lemma:embed_into_assignment}.

\begin{proof}[Proof of Proposition~\ref{prop:dense_embed}]
As noted earlier, our assumptions on~$H$ trivially imply that $\{V(H)\}$ is a level-$0$ assignment of~$Q_n$ into~$H$, provided~$n$ is sufficiently large. Now, applying Lemma~\ref{lemma:refining_an_assignment} for each $r \in \{0,\ldots,k\}$ in turn, we obtain a level-$(k+1)$ assignment of~$Q_n$ into~$H$. Finally, by Lemma~\ref{lemma:embed_into_assignment}, it follows that $Q_n \subset H_R$, as required.
\end{proof}

\section{Embedding with matchings}\label{MatchingSec}

In the previous section, we showed how to efficiently embed the hypercube $Q_n$ in a dense red colouring $H$ which is blue $K_s$-free. In this section we will strengthen Proposition~\ref{prop:dense_embed} in the following way. We shall show that if $V(H)$ may be partitioned into a reasonably small family of sets such that $H_R$ is dense on each, then either we can still efficiently embed the hypercube, or $H$ contains a large and dense blue $r$-partite graph. 

More precisely, we shall prove the following proposition. 

\begin{prop}\label{prop:matching_embed}
Given any $C,\eps > 0$ and $s,k \in \N$, there exists $n_0 = n_0(C,\eps,s,k) \in \N$ such that the following holds whenever $n \ge n_0$. Set $p = \log_{(k+4)} (n)$ and $q = \log_{(k)} (n)$.

Let $G$ be a two-coloured complete graph on at most $C \cdot 2^n$ vertices with no blue~$K_s$, and let $\U$ be a collection of disjoint sets of vertices such that:
\begin{itemize}
 \item[$(a)$] $|U| \ge 2^{n-p}$ for every $U \in \U$.\smallskip
 \item[$(b)$] $|\Delta_B(U)| \le 2^{n-q}$ for every $U \in \U$.
\end{itemize}
Then either there exists a partition $\U = \U_1 \cup \cdots \cup \U_r$ and a set $X \subset V(G)$ such that 
\begin{itemize}
 \item[$(i)$] $|X| \le \eps 2^n$ and $\sum_{U \in \U_i} |U| \le  \big( 1 + \eps \big) 2^n$ for every $i \in [r]$, and\smallskip
 \item[$(ii)$] $d_B\big( U_i \setminus X,U_j \setminus X \big) \le 1/n^4$ for every $U_i \in \U_i$ and $U_j \in \U_j$ with $i,j \in [r]$ and $i \neq j$,
\end{itemize}
or $Q_n \subset G_R$.
\end{prop}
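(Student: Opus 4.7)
Following the strategy sketched in the introduction, choose $m = m(n, s, k)$ tending to infinity with $n$ slowly enough that $R_{2^m}(s) \le \log_{(k+3)}(n-m)$ (where $R_r(s)$ denotes the $r$-colour Ramsey number for $K_s$), and set $t = \binom{m}{\lfloor m/2 \rfloor}$. For each unordered pair $\{U, U'\} \subseteq \U$, call the pair \emph{blue-rich} if $d_B(U, U') > 1/n^4$, else \emph{red-rich}; a short supersaturation argument based on the K\"ov\'ari--S\'os--Tur\'an theorem shows that red-rich pairs contain a maximal family $\M(U, U')$ of vertex-disjoint red copies of $K_{t,t}$ of polynomial size, while blue-rich pairs contain far fewer. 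Let $F$ be the auxiliary graph on $\U$ whose edges are the blue-rich pairs, and let $\C$ denote the set of its connected components.

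\textbf{Case split.} If every component $C \in \C$ satisfies $\sum_{U \in C} |U| \le (1+\eps)2^n$, take $X = \emptyset$ and choose $\U_1, \ldots, \U_r$ to be unions of $F$-components respecting the weight bound: any cross-part pair is not blue-rich, so $d_B(U_i, U_j) \le 1/n^4$. Otherwise some $C^* \in \C$ has $\sum_{U \in C^*} |U| > (1+\eps)2^n$; in this case we aim to embed $Q_n \subseteq G_R$ using only vertices of $W := \bigcup_{U \in C^*} U$.

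\textbf{Super-graph embedding.} By hypothesis $(b)$ together with $|U| \ge 2^{n-p} \gg 2^{n-q}$, $G_R[U]$ is extremely dense for each $U$; so for each $U \in C^*$, greedily extend the $K_{t,t}$-seeds from red-rich pairs incident to $U$ into $\lfloor |U|/2^m \rfloor$ vertex-disjoint red copies of $Q_m$ inside $U$, each with a fixed identification with $\{0,1\}^m$. Let $V^*$ denote the resulting collection of super-vertices (so $|V^*| \ge (1+\eps)2^{n-m}$), and form a two-coloured complete super-graph $H^*$ on $V^*$ by colouring the super-edge $(V_\alpha, V_\beta)$ red iff every parallel $G$-edge between them (i.e., between vertices sharing a common label in $\{0,1\}^m$) is red, and blue otherwise. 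A blue clique of size $N := R_{2^m}(s)$ in $H^*_B$ is ruled out: any such clique provides, for each of its pairs, a witness blue parallel edge of some label $\ell \in \{0,1\}^m$, and the $2^m$-colour Ramsey theorem applied to this labelling yields $s$ super-vertices whose pairwise witness labels coincide, hence $s$ vertices of $G$ forming a blue $K_s$ --- contradicting the hypothesis. Combined with a blue-super-degree bound $\Delta(H^*_B) \le 2^{n-m}/\log_{(k+2)}(n-m)$ (the main technical step below), Proposition~\ref{prop:dense_embed} applied with $n-m$ in place of $n$ and $k+2$ in place of $k$ yields $Q_{n-m} \subseteq H^*_R$; replacing super-vertices by their $Q_m$'s and red super-edges by parallel red matchings then produces $Q_n \subseteq G_R$.

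\textbf{Main obstacle.} The most delicate step is the blue-super-degree bound on $H^*$. A super-vertex $V_\alpha$ inside $U_\alpha$ receives blue super-edges only from $V_\beta$'s inside $U_\beta$'s for which some parallel $G$-edge is blue: within $U_\alpha$ itself this is handled directly by hypothesis $(b)$, while between $U_\alpha$ and a red-rich $F$-neighbour $U_\beta$ the near-complete red density forces most parallel matchings to be red; the remaining contribution comes from blue-rich $F$-neighbours of $U_\alpha$. The weighted total $\sum_{U_\beta \in N_F(U_\alpha)} |U_\beta|$ is controlled by exploiting the inherited $K_s$-freeness of $F$ (a blue-rich $K_s$ in $F$ would, by a direct sampling argument using $|U| \ge 2^{n-p}$, yield a blue $K_s$ in $G$), limiting iteratively the clique number of blue-rich neighbourhoods much as in Section~\ref{SimonSec}. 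The precise choice of $m$ relative to $p$, $q$ and $R_{2^m}(s)$ is then calibrated so that this bound meets the $\log_{(k+2)}$-style hypothesis of Proposition~\ref{prop:dense_embed}.
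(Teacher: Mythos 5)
Your dichotomy is built on the wrong auxiliary graph, and this breaks the embedding branch. You take components of the ``blue-rich'' relation ($d_B(U,U')>1/n^4$) and, when some component $C^*$ is heavy, try to embed $Q_n$ into it; but blue-richness carries no red structure whatsoever (two sets joined entirely in blue form a blue-rich pair yet contain no red $K_{t,t}$ between them), so nothing guarantees that the sets of $C^*$ can be chained together by red $K_{t,t}$'s. The paper runs the dichotomy the other way: it calls a pair \emph{$(m,\gamma)$-good} if $G_R[X_1,X_2]$ contains $K_{t,t}$ for \emph{every} pair of subsets $X_i$ with $|X_i|\ge(1-\gamma)|U_i|$ (Definition~\ref{defn:m_good}), takes components of \emph{that} relation, and embeds $Q_n$ into a heavy red-connected component via Lemmas~\ref{lemma:tree_to_path}, \ref{lem:path:embed} and~\ref{lem:tree:embed}. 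The robustness in the definition is essential: one must extract $(1+\eps)2^{n-m}$ \emph{vertex-disjoint} red copies of $K_{t,t}$ between consecutive sets of a closed walk visiting every set of the component, i.e.\ exponentially many, so your ``maximal families of polynomial size'' are far too small. In the partition branch, Theorem~\ref{thm:KST} applied to the witnessing subsets $Y_W(U),Y_U(W)$ of a non-good pair gives \emph{red} density $\le 1/n^4$ between distinct parts (the $d_B$ in condition~$(ii)$ is a typo for $d_R$, as the use of the proposition in Theorem~\ref{thm:stability} shows: the parts are the proto-red-cliques and must be blue-dense to one another), and the exceptional set $X$ is exactly the union of the complements $U\setminus Y_W(U)$ --- your argument, which thresholds densities directly and sets $X=\emptyset$, proves the literal (mistyped) statement in the partition branch but produces a structure that is useless downstream and unusable for embedding.

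Even granting red connectivity, your super-graph $H^*$ cannot satisfy the hypotheses of Proposition~\ref{prop:dense_embed}. You place each copy of $Q_m$ inside a single set $U$ and colour a super-edge red iff all $2^m$ parallel edges are red; the parallel edges between copies lying in different sets $U_\alpha,U_\beta$ are then \emph{inter}-set edges, about which hypothesis~$(b)$ says nothing. A red-rich pair with blue density as large as $1/n^4$ still has of order $2^{2n-2p}/n^4$ blue edges, which can be spread so that essentially every pair of copies sees a blue parallel edge; then $\Delta(H^*_B)$ is comparable to $|V^*|$ and no blue-degree bound of the required form holds. You correctly flag this as the main obstacle, but your proposed remedy (controlling blue-rich $F$-neighbourhoods via $K_s$-freeness of $F$) attacks the wrong term: the damage comes from red-rich pairs. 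The paper avoids the issue structurally in Lemma~\ref{lem:path:embed}: each copy of $Q_m$ is built to span the entire $m$-path, with the slice $Q_m[a(i),b(i)]$ of each copy sitting inside $V_i$, so that every parallel edge between two copies lies \emph{inside} a single $V_i$ and the blue super-degree is at most $2^m\cdot 2^{n-q}$ directly from hypothesis~$(b)$. This construction, together with the $(m,\gamma)$-good/KST dichotomy, is what your proposal is missing.
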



In order to motivate the proof of Proposition~\ref{prop:matching_embed}, let us begin by discussing the following special case. Let $|\U| = 2$, and suppose moreover that $U_1$ and $U_2$ each have size $(1+\eps)2^{n-1}$. Suppose that there exists a perfect red matching $M$ between $U_1$ and $U_2$, and consider the two-colouring of $E(K_{|M|})$ obtained by identifying the endpoints of $M$ and taking the union of the blue graphs. Since $G$ contains no blue $K_s$, this colouring contains no blue $K_{R(s)}$, where $R(s)$ denotes the Ramsey number of $s$. Hence, by Proposition~\ref{prop:dense_embed}, it contains a red~$Q_{n-1}$, which (using $M$ to join the two sides) corresponds to a red copy of~$Q_n$ in the original colouring. We thus obtain the following result.

\begin{example}\label{example:matching_embed}
Let $G$ be a two-coloured complete graph with no blue~$K_s$, as in Proposition~\ref{prop:matching_embed}, and suppose that $\U = \{U_1,U_2\}$, and that $|U_1|,|U_2| \ge (1+\eps)2^{n-1}$. If~$G$ contains a perfect matching of red edges between~$U_1$ and~$U_2$, then $Q_n \subset G_R$.
\end{example}

In order to prove Proposition~\ref{prop:matching_embed}, we shall generalize this idea by decomposing $Q_n$ as $Q_m \times Q_{n-m}$ (instead of $Q_1 \times Q_{n-1}$), and by allowing the copies of $Q_m$ to span several dense red sets, rather than just two. In order to find such copies of~$Q_m$, it will be sufficient to find many red copies of $K_{t,t}$, where $t \ge \binom{m}{m/2}$.\footnote{We remind the reader of the well-known fact that a graph which contains no copy of $K_{t,t}$ must necessarily be rather sparse, see Theorem~\ref{thm:KST} below.} We next give another example, again in the case $|\U| = 2$, to illustrate this crucial observation.

\begin{example}\label{example:matching2}
Let $G$ be as in Example~\ref{example:matching_embed}. If~$G_R[U_1,U_2]$ contains at least $(1+\eps)2^{n-m}$ disjoint copies of $K_{t,t}$, where $t = \binom{m}{m/2}$ and $m = o\big( \log \log q \big)$, then $Q_n \subset G_R$.
\end{example}

\begin{proof}
We first attempt to greedily extend each copy of $K_{t,t}$, one by one, to a (disjoint) red copy of $Q_m$ with one half in $U_1$ and the other in $U_2$. By assumption~$(b)$, at most $m 2^{n-q} \ll 2^n$ vertices are forbidden at each step, and so we will succeed in covering all but $o(2^n)$ vertices of $U_1 \cup U_2$. Next we define a two-colouring $H$ of the complete graph on $M = (1 + \eps) 2^{n-m}$ vertices by identifying all points in the same copy of $Q_m$, and colouring an edge blue if any of the corresponding $2^m$ matching edges\footnote{That is, the edges required between the two copies of $Q_m$ to create a copy of $Q_{m+1}$.}  were blue in $G$. Note that every vertex has blue degree at most $2^{n-q+m}$ in $H$, and moreover it contains no blue clique on $R_{2^m}(s)$ vertices, where $R_r(s)$ denotes the $r$-colour Ramsey number of $s$. Since $R_r(s) \le r^{rs}$ for every $r,s \ge 2$, it follows by Proposition~\ref{prop:dense_embed} and our choice of $m$ that $H$ contains a red copy of $Q_{n-m}$, which corresponds to a red copy of $Q_n$ in $G$, as required.
 \end{proof} 

In order to prove Proposition~\ref{prop:matching_embed}, we shall need to perform a similar embedding into a large string of sets, not necessarily all the same size, which are connected by many vertex-disjoint copies of $K_{t,t}$. The following definition will be useful.

\begin{defn}
Let $\V$ be a collection of disjoint sets of vertices of a two-coloured complete graph $G$, let $m \in \N$ and set $t = \binom{m}{m/2}$. We say that $\V$ is an \emph{$m$-path in $G_R$} if there exists an ordering $(V_1,\ldots,V_\ell)$ of the members of $\V$ such that, for every $1 \le i < \ell$, the graph $G_R[V_i,V_{i+1}]$ contains $(1 + \eps) 2^{n-m}$ vertex-disjoint copies of $K_{t,t}$.
\end{defn}

The following embedding lemma is the key step in the proof of Proposition~\ref{prop:matching_embed}.

\begin{lemma}\label{lem:path:embed}
Given any $\eps > 0$ and $s,k \in \N$, there exists $n_0 = n_0(\eps,s,k) \in \N$ such that the following holds whenever $n \ge n_0$. Let $p = \log_{(k+4)}(n)$, $q = \log_{(k)}(n)$ and $2^{6p} \ll m \ll 2^{2^p}$. 

Let $G$ be a two-coloured complete graph with no blue~$K_s$, and let~$\V$ be a collection of disjoint sets of vertices of $G$ satisfying the following conditions:
\begin{itemize}
 \item[$(a)$] $\sum_{V \in \V} |V| \ge (1+3\eps)2^n$. \smallskip 
 \item[$(b)$] $|V| \ge 2^{n-3p}$ for every $V \in \V$.\smallskip  
 \item[$(c)$] $|\Delta_B(V)| \le 2^{n-q}$ for every $V \in \V$. \smallskip
 \item[$(d)$] $\V$ is an $m$-path in $G_R$.
\end{itemize}
Then $Q_n \subset G_R$.
\end{lemma}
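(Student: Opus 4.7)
The plan is to identify $Q_n$ with the box product $Q_m \, \square \, Q_{n-m}$, build a collection $\C$ of at least $(1+\eps)2^{n-m}$ vertex-disjoint red copies of $Q_m$ in $G$, find a red $Q_{n-m}$ in an auxiliary two-colouring on $\C$, and lift the composite to a red copy of $Q_n$. This is the same template used in Example~\ref{example:matching2}, with more delicate bookkeeping forced by the path structure.

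For the first step, the $K_{t,t}$'s guaranteed by condition~(d) serve as seeds: for each pair $(V_i, V_{i+1})$ along the path, we extend each red $K_{t,t}$ to a red $Q_m$ whose two $Q_{m-1}$-halves lie in $V_i$ and $V_{i+1}$ respectively. The extension proceeds greedily, one vertex at a time; a newly placed vertex needs red edges to at most $m$ previously placed vertices, each of total blue degree at most $2^{n-q}$ by~(c), so the forbidden set at each step has size at most $m \cdot 2^{n-q}$, which is negligible compared to $|V_i| \ge 2^{n-3p}$ because $q \gg p$ and $m \ll 2^{2^p} \ll 2^q$. Distributing the $Q_m$'s across the pairs while respecting the vertex capacity of each $V_i$ (shared between at most two consecutive pairs) and the $K_{t,t}$ supply of each pair is a careful allocation problem, solvable thanks to the slack $\sum|V| \ge (1+3\eps)2^n$ in~(a): it yields $\ge (1+\eps)2^{n-m}$ disjoint red $Q_m$'s after absorbing the $o(2^n)$ greedy waste.

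Next we fix a labelling of each $C \in \C$ by $\{0,1\}^m$ and form the auxiliary complete graph $H$ on $\C$: an edge $\{C, C'\}$ is blue in $H$ precisely when some position $\vy \in \{0,1\}^m$ witnesses a blue edge in $G$ between the $\vy$-th vertex of $C$ and the $\vy$-th vertex of $C'$. Two properties of $H$ are needed to apply Proposition~\ref{prop:dense_embed} to embed $Q_{n-m}$ in $H_R$. First, the maximum blue degree in $H$ is at most $\sum_{v \in C} d_B(v) \le 2^m \cdot 2^{n-q} = 2^{n+m-q}$, which lies within the range permitted by Proposition~\ref{prop:dense_embed} because $q - 2m$ is far larger than $\log_{(k+1)}(n)$ (using $q = \log_{(k)}(n)$ and $m \ll 2^{2^p}$). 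Second, $H_B$ contains no clique on $R = R_{2^m}(s)$ vertices: any such blue clique labels each of its edges $\{i,j\}$ by a position $\vy_{ij} \in \{0,1\}^m$ witnessing a blue matching edge, and a $2^m$-colour Ramsey argument on $K_R$ produces $s$ copies of $Q_m$ sharing a common $\vy$, whose $\vy$-th vertices would then form a blue $K_s$ in $G$, contradicting the hypothesis. The upper bound on $m$ is precisely calibrated so that $R$ remains within the range allowed by Proposition~\ref{prop:dense_embed}.

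The lift back is immediate. Given the embedding $\varphi\colon V(Q_{n-m}) \to \C$ from Proposition~\ref{prop:dense_embed}, set $\psi(\vy, \vx) = v_\vy(\varphi(\vx))$, the $\vy$-th vertex of the $Q_m$-copy $\varphi(\vx)$. Edges of $Q_m \, \square \, Q_{n-m}$ of the first type (fixed $\vx$, adjacent $\vy$'s) are red edges inside $\varphi(\vx) \in \C$; edges of the second type (fixed $\vy$, adjacent $\vx$'s) are matching edges at position $\vy$ between $\varphi(\vx)$ and $\varphi(\vx')$, which are red because the $H$-edge $\varphi(\vx)\varphi(\vx')$ is red. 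The main obstacle is the construction of $\C$: unlike in Example~\ref{example:matching2}, each interior $V_i$ is shared between two consecutive pairs, so the allocation of which $K_{t,t}$'s to extend must be chosen carefully to guarantee that the target count of $(1+\eps)2^{n-m}$ red $Q_m$'s is attained within the combined vertex and $K_{t,t}$ budgets provided by~(a),~(b),~(c), and~(d).
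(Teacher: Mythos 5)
Your overall template --- build $(1+\eps)2^{n-m}$ vertex-disjoint red copies of $Q_m$, pass to the auxiliary colouring $H$, rule out a blue $K_{R_{2^m}(s)}$ in $H$, apply Proposition~\ref{prop:dense_embed}, and lift --- matches the paper, and those parts of your argument are sound. The gap is in the step you explicitly defer: the construction of the copies of $Q_m$. You place each copy of $Q_m$ entirely within a single consecutive pair $(V_i,V_{i+1})$, with one half in each set, and then claim the resulting ``allocation problem'' is solvable using the slack in condition~$(a)$. It is not, in general. Take $\ell=3$ with $|V_1|=|V_2|=|V_3|=\tfrac{1}{3}(1+3\eps)2^n$. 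If $N_1$ copies sit on the pair $(V_1,V_2)$ and $N_2$ on $(V_2,V_3)$, then the middle set forces $2^{m-1}(N_1+N_2)\le |V_2|$, i.e.\ $N_1+N_2\le \tfrac{2}{3}(1+3\eps)2^{n-m}$, which is strictly less than the required $(1+\eps)2^{n-m}$ for small $\eps$. The aggregate vertex count is fine, but a single interior set is a local bottleneck that strands roughly a third of the vertices of $V_1\cup V_3$. (A secondary problem: splitting $Q_m$ into two $Q_{m-1}$-halves requires a red \emph{matching} of size $2^{m-1}$ between $V_i$ and $V_{i+1}$ per copy, and since $t=\binom{m}{m/2}=O(2^m/\sqrt{m})\ll 2^{m-1}$, one $K_{t,t}$ per copy does not supply this; you would need $\Theta(\sqrt{m})$ of them, and the budget of $(1+\eps)2^{n-m}$ disjoint $K_{t,t}$'s per pair is then far too small.)

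The paper's fix is to make every copy of $Q_m$ span the \emph{entire} path rather than a single pair: one chooses thresholds $0\le b(1)<\dots<b(\ell')=m$ and assigns to $V_i$ the band of layers $Q_m[a(i),b(i)]$ (with $a(i)=b(i-1)+1$), where $b(i)$ is chosen greedily so that $v\bigl(Q_m[a(i),b(i)]\bigr)$ is proportional to $|V_i|$ up to the factor $(1+2\eps)2^{n-m}$. The bipartite graph of $Q_m$ between consecutive bands is between two single layers, hence has both parts of size at most $t=\binom{m}{m/2}$ and fits inside one $K_{t,t}$; the interiors of the bands are then completed greedily using condition~$(c)$ exactly as you describe. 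Because the band widths track the set sizes, every $V_i$ is filled up to its capacity and the count $(1+\eps)2^{n-m}$ is achieved, with the boundary waste controlled by $\ell'\ll\sqrt{m}$ (which is where the hypothesis $m\gg 2^{6p}$ enters). You should replace your half-and-half split by this variable-band decomposition; the remainder of your argument then goes through as written.
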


Given $m \in \N$ and $0 \le a \le b \le m$, let us write
\[
 Q_m[a,b] \, = \, \Big\{ \vx \in Q_m : a \le |\vx| \le b \Big\}
 \]
for the subgraph of the hypercube $Q_m$ induced by the layers corresponding to sets of size between $a$ and $b$. We shall embed into each set $V \in \V$ a copy of $Q_m[a,b] \times Q_{n-m}$, where $a$ and $b$ are chosen so that $v\big(Q_m[a,b] \big)$ is proportional to $|V|$.  With this notation in hand, we are ready to prove the embedding lemma.

\begin{proof}[Proof of Lemma~\ref{lem:path:embed}] 
Let $\eps > 0$ and $s,k \in \N$ be arbitrary, let $m,n,p,q \in \N$ be as described in the statement of the lemma, and let $G$ be a two-coloured complete graph with no blue~$K_s$. Let~$\V$ be a collection of disjoint vertex sets satisfying conditions $(a)$, $(b)$, $(c)$ and $(d)$ of the lemma, and let $(V_1,\ldots,V_\ell)$ be an ordering of the elements of $\V$ such that $G_R[V_i,V_{i+1}]$ contains at least $M = (1 + \eps) 2^{n-m}$ vertex-disjoint copies of $K_{t,t}$ for every $1 \le i < \ell$.  

The first step is to choose an $\ell' \le \ell$ and a sequence of integers
$$0 \le b(1) < \dots < b(\ell') = m$$ 
such that, if we set $a(1) = 0$ and $a(i) = b(i-1) + 1$ for each $2 \le i \le \ell'$, then
\begin{equation}\label{eq:choosingells}
|V_i| \, \ge \, \big( 1 + 2\eps \big) 2^{n-m} \cdot v\big(Q_m[a(i),b(i)] \big)
\end{equation}
for every $i \in [\ell']$. We do so greedily, by choosing $b(i)$ to be maximal such that~\eqref{eq:choosingells} holds. To see that this works, simply note that,  
$$\sum_{i \in [\ell]} |V_i| \, \ge \, \big( 1 + 3\eps \big) 2^n  \, \ge \, \big( 1 + 2\eps \big) 2^{n-m} \bigg( 2^m + \ell' \cdot {m \choose m/2} \bigg),$$
by property~$(a)$. The second inequality above holds since $\ell' \ll \sqrt{m}$, which follows from property~$(b)$ and the fact that $2^p \ll \sqrt{m}$ and $v\big(Q_m[a,a] \big) \le t = O(2^m/\sqrt{m})$. 

Next, for each $1 \le i < \ell'$, choose from $G[V_i,V_{i+1}]$ a collection of $M$ disjoint copies of the complete bipartite graph with part sizes ${m \choose b(i)}$ and ${m \choose a(i+1)}$, and note that we may do so by our assumption, together with the fact that ${m \choose j} \le t$ for every $j \in [m]$. Let us write  
$$S^{(1)}_i,\ldots,S^{(M)}_i \subset V_{i} \qquad \text{and} \qquad T^{(1)}_i,\ldots,T^{(M)}_i \subset V_{i+1}$$ 
for the vertex sets of these complete bipartite graphs. 

We now greedily extend, for each $j \in [M]$, the graph induced (in $G_R$) by $\bigcup_{i = 1}^{\ell'} S^{(j)}_i \cup T^{(j)}_i$ to a red copy of $Q_m$, which we shall call $Q_m^{(j)}$. To do so, simply note that by condition~$(b)$, every vertex sends at most $2^{n-q}$ blue edges into its own part, and hence we will run out of space only when all but $m 2^{n-q} = o(|V|)$ vertices of some set $V \in \V$ have already been used. By~\eqref{eq:choosingells}, this will not happen before we have completed all $M$ copies of $Q_m$. 

Finally, consider the two-colouring $H$ of $K_M$ obtained by identifying the vertices of each copy of $Q_m$, and placing a blue edge between two vertices if any of the corresponding $2^m$ matching edges of $G$ were blue. To be precise, set $V(H) = \big\{ Q_m^{(1)}, \ldots, Q_m^{(M)} \big\}$ and 
$$E( H_R ) \, = \, \Big\{ \big\{ Q_m^{(i)}, Q_m^{(j)} \big\} \,:\, \vx^{(i)} \vx^{(j)} \in E(G_R) \text{ for every } \vx \in Q_m \Big\},$$ 
where $\vx^{(i)}$ denotes the vertex of $Q_m^{(i)}$ corresponding to $\vx$. The crucial observation is that $H$ contains no blue clique on $R_{2^m}(s)$ vertices, 
since $G_B$ is $K_s$-free. Indeed, we may colour each blue edge of $H$ by an arbitrary element $\vx \in Q_m$ such that $\vx^{(i)} \vx^{(j)} \in E(G_B)$; it is easy to see that a monochromatic $s$-clique in this colouring corresponds to a blue copy of $K_s$ in $G$. 

It only remains to check that $H$ satisfies the conditions of Proposition~\ref{prop:dense_embed}. To see this, recall that $R_r(s) \le r^{rs}$ (this follows by a simple induction), and observe every vertex in $H$ has blue degree at most $2^{n-q+m}$ in $H$. Moreover, since $m = o(\log \log q)$, it follows that $R_{2^m}(s) \ll q$. Hence, by Proposition~\ref{prop:dense_embed}, the two-colouring $H$ contains a red copy of $Q_{n-m}$, which corresponds to a red copy of $Q_n$ in $G$, as required.
\end{proof}

We shall next use Lemma~\ref{lem:path:embed} to find a red copy of $Q_n$ in a slightly more general structure. Note that the disjoint copies of $K_{t,t}$ in the proof above only covered a small proportion of the vertices of each set $V \in \V$. This motivates the following definition.

\begin{defn} \label{defn:m_good}
Let $G$ be a two-coloured complete graph. For each $m \in \N$ and $\gamma > 0$, we say that a pair $\{U_1,U_2\}$ of disjoint sets of vertices of $G$ is {\em $(m,\gamma)$-good} if 
$$G_R[X_1,X_2] \textup{ contains a copy of $K_{t,t}$, where $t = \binom{m}{m/2}$}$$
for every $X_1 \subset U_1$ and $X_2 \subset U_2$ with $|X_i| \ge \big( 1 - \gamma \big) |U_i|$ for each $i \in \{1,2\}$. 
\end{defn}

Given $m \in \N$, $\gamma > 0$ and a collection~$\U$ of disjoint sets of vertices in a two-coloured complete graph $G$, we say that $\U$ is \emph{$(m,\gamma)$-connected in $G_R$} if the graph on vertex set~$\U$ whose edges are the $(m,\gamma)$-good pairs in ${\U \choose 2}$ is connected. 

\begin{lemma}\label{lem:tree:embed}
Given $\eps > 0$ and $s,k \in \N$, there exists $n_0 = n_0(\eps,s,k)$ such that the following holds whenever $n \ge n_0$. Let $p = \log_{(k+4)} (n)$, $q = \log_{(k)} (n)$, $2^{8p} \ll m \ll 2^{2^p}$ and $\gamma \ge m^{-1/4}$. 

Let $G$ be a two-coloured complete graph with no blue~$K_s$, and let~$\U$ be a collection of disjoint sets of vertices of $G$ satisfying the following conditions:
\begin{itemize}
 \item[$(a)$] $\sum_{U \in \U} |U| \ge (1+3\eps)2^n$. \smallskip 
 \item[$(b)$] $|U| \ge 2^{n-p}$ for every $U \in \U$.\smallskip  
 \item[$(c)$] $|\Delta_B(U)| \le 2^{n-q}$ for every $U \in \U$. \smallskip
 \item[$(d)$] $\U$ is $(m,\gamma)$-connected in $G_R$.
\end{itemize}
Then $Q_n \subset G_R$.
\end{lemma}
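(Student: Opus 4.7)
The plan is to reduce this lemma to the path case, Lemma~\ref{lem:path:embed}, by performing a closed Euler circuit of a spanning tree of the $(m,\gamma)$-good connectivity graph on $\U$, and splitting each $U \in \U$ into sub-parts according to its visits along the circuit. I may assume without loss of generality that $\sum_{U \in \U} |U| \le (1 + 4\eps) 2^n$ (discarding redundant sets), so by condition~$(b)$, $\ell := |\U| \le 2^{p+2}$. Let $T$ be a spanning tree of the $(m,\gamma)$-good auxiliary graph on $\U$, which exists by condition~$(d)$, and let $\sigma = (V_{i_1}, \ldots, V_{i_L})$ be a closed Euler circuit of the doubled tree $2T$. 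Each $V \in \U$ is visited $d_V$ times in $\sigma$, with $d_V \le \deg_T(V) + 1 \le \ell$.

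My first task would be, for each tree edge $e = \{V,V'\}$ of $T$, to extract $M' := 2(1+\eps) 2^{n-m}$ vertex-disjoint red copies of $K_{t,t}$ from $G_R[V, V']$, where $t = \binom{m}{m/2}$, ensuring that no vertex of $G$ is used in two different extracted copies across all tree edges. I would do this greedily: processing tree edges in turn, and writing $U_V$ for the set of vertices of $V$ already consumed at a given moment, I just need $G_R[V \setminus U_V,\, V' \setminus U_{V'}]$ to contain a $K_{t,t}$ at each step, which the $(m,\gamma)$-good property of $\{V,V'\}$ guarantees provided $|U_V| \le \gamma |V|$ and $|U_{V'}| \le \gamma |V'|$ throughout. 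The total consumption at $V$ across all incident tree edges is at most $d_V \cdot M' \cdot t$, and using $d_V \le \ell \le 2^{p+2}$, $t \le 2^{m+1}/\sqrt{m}$ (Stirling), $\gamma \ge m^{-1/4}$, and $m \gg 2^{8p}$, a short calculation yields $d_V M' t \le \gamma |V|$; this is the step where the quantitative hypothesis on $m$ is used.

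Next, I would split the $M'$ copies extracted from $e = \{V,V'\}$ into two equal groups of size $(1+\eps)2^{n-m}$, one per traversal of $e$ in $\sigma$. For each visit $r$ of $V$ in $\sigma$, let $S_V^{(r)} \subseteq V$ be the union of the $V$-sides of the $K_{t,t}$-groups assigned to the entering and exiting traversals at that visit, and partition the remainder $V \setminus \bigcup_r S_V^{(r)}$ into $d_V$ equal pieces $W_V^{(r)}$, one per visit. Setting $V^{(r)} = S_V^{(r)} \cup W_V^{(r)}$ partitions $V$ into sub-parts of size at least $(1-\gamma)|V|/d_V \ge 2^{n-3p}$. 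The Euler-ordered sequence $\V = \big( V_{i_1}^{(r_1)}, \ldots, V_{i_L}^{(r_L)} \big)$ is then an $m$-path in $G_R$ satisfying all four hypotheses of Lemma~\ref{lem:path:embed}: $(a)$ since the sub-parts exactly partition $\bigcup_{U \in \U} U$, $(b)$ by the size estimate above, $(c)$ is inherited from our hypothesis~$(c)$, and $(d)$ because the $(1+\eps)2^{n-m}$ copies of $K_{t,t}$ allocated to each traversal lie inside the corresponding consecutive pair of sub-parts. Applying Lemma~\ref{lem:path:embed} to $\V$ then yields $Q_n \subset G_R$.

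The main obstacle is the simultaneous balancing of three parameter constraints: each sub-part must have size $\ge 2^{n-3p}$, forcing $\ell = O(2^{2p})$ (which follows from $(b)$); the extraction budget $\gamma |V|$ must absorb the cumulative consumption $d_V M' t \lesssim \ell \cdot 2^n/\sqrt{m}$, forcing $\gamma\sqrt{m} \gg \ell \cdot 2^p$, which is where the present lemma's hypotheses $m \gg 2^{8p}$ and $\gamma \ge m^{-1/4}$ enter; and $m$ must lie in the window $2^{6p} \ll m \ll 2^{2^p}$ needed by Lemma~\ref{lem:path:embed}. The hypotheses of the present lemma have been calibrated precisely so that all three constraints hold simultaneously.
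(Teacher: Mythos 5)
Your proposal is correct and follows essentially the same route as the paper: the paper likewise prunes to a spanning tree of the $(m,\gamma)$-good graph, takes a closed walk covering it (the doubled-tree Euler circuit), greedily extracts the disjoint red copies of $K_{t,t}$ using the $(m,\gamma)$-good property together with the bound $\gamma\sqrt{m}\,|U| \gg 2^n|\U|$, refines each $U$ into one sub-part per visit of size at least $|U|/2|\U| \ge 2^{n-3p}$, and then applies Lemma~\ref{lem:path:embed}. The only cosmetic difference is that the paper isolates the refinement step as a separate lemma (Lemma~\ref{lemma:tree_to_path}); your quantitative checks match the paper's.
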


Lemma~\ref{lem:tree:embed} will follow easily from Lemma~\ref{lem:path:embed} once we have refined the partition $\U$ so as to produce an $m$-path $\V$. The following straightforward lemma performs this refinement.

\begin{lemma}\label{lemma:tree_to_path}
Let $G$ be a two-coloured complete graph, let $m \in \N$ and $\gamma > 0$, and let~$\U$ be a collection of disjoint sets of vertices which is $(m,\gamma)$-connected in $G_R$. If 
\begin{equation}\label{eq:treetopath}
\gamma \sqrt{m} \cdot |U| \, \gg \, 2^n \cdot |\U|
\end{equation}
for every $U \in \U$, then there exists a refinement $\V$ of $\U$, satisfying 
$$|V| \ge \frac{|U|}{2|\U|}$$
for each $V \in \V$ with $V \subset U \in \U$, such that $\V$ is an $m$-path in $G_R$. 
\end{lemma}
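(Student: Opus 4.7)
The plan is to walk along a spanning tree of the $(m,\gamma)$-connectivity graph of $\U$, greedily extract the required copies of $K_{t,t}$ edge by edge along this walk, and then pad each resulting chunk up to the size demanded by the conclusion. Since the auxiliary graph on $\U$ whose edges are the $(m,\gamma)$-good pairs is connected by hypothesis, it admits a spanning tree $T$. Fix a depth-first traversal of $T$ that traverses each edge exactly twice, producing a sequence $W = (U^{(1)}, \ldots, U^{(L)})$ with $L \le 2|\U| - 1$ in which consecutive terms are adjacent in $T$ and therefore form $(m,\gamma)$-good pairs in $G_R$. Let $k_U$ denote the number of appearances of $U \in \U$ in $W$, so that $\sum_{U \in \U} k_U = L \le 2|\U|$.

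Next, process the pairs $(U^{(j)}, U^{(j+1)})$ for $j = 1, \ldots, L-1$ in order, and at each step greedily extract $M := \lceil (1+\eps) 2^{n-m} \rceil$ vertex-disjoint red copies of $K_{t,t}$ from $G_R[U^{(j)}, U^{(j+1)}]$, using only vertices not yet consumed at an earlier step. Let $A_j \subseteq U^{(j)}$ and $B_j \subseteq U^{(j+1)}$ denote the two sides of the $M$ copies extracted at step $j$. This greedy extraction succeeds because the definition of $(m,\gamma)$-good produces a copy of $K_{t,t}$ whenever the remaining parts of $U^{(j)}$ and $U^{(j+1)}$ have size at least a $(1-\gamma)$-fraction of the original, and the total number of vertices of any fixed $U \in \U$ ever used across the whole process is
\[
O\big( k_U \cdot M \cdot t \big) \;=\; O\big( |\U| \cdot 2^n / \sqrt{m} \big),
\]
which by the hypothesis $\gamma \sqrt{m}\, |U| \gg 2^n |\U|$ is $o(\gamma |U|)$.

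Finally, set
\[
V_j \;=\; A_j \,\cup\, B_{j-1} \,\cup\, P_j
\]
for each $j \in [L]$ (with the convention $A_L = B_0 = \emptyset$), where $P_j \subseteq U^{(j)}$ is an arbitrary padding set of size $\lceil |U^{(j)}|/(2|\U|) \rceil$, chosen to be disjoint from every extracted $K_{t,t}$ side and from the padding sets $P_{j'}$ for the other appearances $j' \ne j$ of $U^{(j)}$ in $W$. Such paddings can be chosen because, for each $U \in \U$, the total volume needed inside $U$ is at most $k_U \cdot |U|/(2|\U|) + o(\gamma|U|) \le |U|$. The size bound $|V_j| \ge |U^{(j)}|/(2|\U|)$ is then immediate from the padding, and the $m$-path property holds because the $M$ disjoint copies of $K_{t,t}$ extracted at step $j$ have their two sides in $A_j \subseteq V_j$ and $B_j \subseteq V_{j+1}$, so they sit in $G_R[V_j, V_{j+1}]$. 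The only nontrivial point in the argument is the budget estimate in the middle paragraph; the precise form of the hypothesis $\gamma \sqrt{m}\, |U| \gg 2^n |\U|$ is what makes that estimate go through.
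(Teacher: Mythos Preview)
Your proof is correct and follows essentially the same approach as the paper: take a closed walk on a spanning tree of the $(m,\gamma)$-connectivity graph, greedily extract the required $K_{t,t}$'s along consecutive pairs using the budget estimate $(1+\eps)2^{n-m}\cdot t\cdot|\U| \le \gamma|U|$, and then split each $U$ among its appearances in the walk. The only cosmetic difference is that the paper partitions each $U$ evenly among its appearances (so each piece has size $\ge \lfloor |U|/(u+1)\rfloor$), whereas you pad each $V_j$ up to the minimum required size; both give the stated lower bound.
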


\begin{proof}
Set $u = |\U| - 1$, and let $(W_0,\ldots,W_{2u})$ be a closed walk on vertex set $\U$ that visits every $U \in \U$ at least once, and such that $(W_i,W_{i+1})$ is $(m,\gamma)$-good for each $0 \le i < 2u$. 
We claim that there exists a collection $\K$ of disjoint red copies of $K_{t,t}$, where $t = \binom{m}{m/2}$, such that each graph $G_R[W_i,W_{i+1}]$ contains exactly $(1 + \eps) 2^{n-m}$ elements of $\K$. Indeed, by Definition~\ref{defn:m_good} we can simply find the elements of $\K$ greedily, noting that, by~\eqref{eq:treetopath}, we use up at most
$$(1 + \eps) 2^{n-m} \cdot t \cdot |\U| \, \le \, \gamma |U|$$
elements of each set $U \in \U$ in the process. 

Now, for each $0 \le i \le 2u$, let $V_i' \subset W_i$ and $V_i'' \subset W_i$ denote the vertices in $W_i$ of the elements of $\K$ in $G_R[W_{i-1},W_i]$ and $G_R[W_i,W_{i+1}]$ respectively (noting that $V_0' = V_{2u}'' = \emptyset$), and let $\V = \{V_0,\ldots,V_{2u}\}$ be an arbitrary refinement of $\U$ such that:
\begin{itemize}
\item[$(a)$] $V_i' \cup V_i'' \subset V_i \subset W_i$ for each $0 \le i \le 2u$. 
\item[$(b)$] If $W_i = W_j$, then $|V_i| \in |V_j| \pm 1$.
\end{itemize}
Since each set $U \in \U$ appears at most $|\U| = u + 1$ times in the multi-set $\{W_1,\ldots,W_{2u}\}$, it follows that $|V| \ge \lfloor |U| / (u+1) \rfloor \ge |U| / 2|\U|$ for every $V \in \V$ with $V \subset U \in \U$. Moreover, since the graph $G_R[V_i,V_{i+1}]$ contains at least $(1 + \eps) 2^{n-m}$ vertex-disjoint copies of $K_{t,t}$, it follows that $\V$ is an $m$-path in $G_R$, as claimed.
\end{proof}

We can now easily deduce Lemma~\ref{lem:tree:embed}.

\begin{proof}[Proof of Lemma~\ref{lem:tree:embed}]
Let $\U$ be a family of disjoint vertex sets as described in the lemma, and note that (by considering a sub-tree if necessary) we may assume that $|\U| \le 2^{p+1}$. We have
$$\gamma \sqrt{m} \cdot |U| \, \gg \, 2^{n+p+1} \, \ge \, 2^n \cdot |\U|$$
for every $U \in \U$, and thus, by Lemma~\ref{lemma:tree_to_path}, there exists a refinement $\V$ of $\U$, satisfying 
$$|V| \ge \frac{|U|}{2|\U|} \, \ge \, 2^{n-3p}$$
for each $V \in \V$, such that $\V$ is an $m$-path in $G_R$. By Lemma~\ref{lem:path:embed}, it follows immediately that $Q_n \subset G_R$, as required.
\end{proof}

We are almost ready to deduce Proposition~\ref{prop:matching_embed} from Lemma~\ref{lem:tree:embed}. The final tool we shall need is the following theorem of K\"ov\'ari, S\'os and Tur\'an~\cite{KST}, which implies that if a pair $\{U,V\}$ is not $m$-good, then the red bipartite subgraph $G_R[U,V]$ has very few edges.

\begin{thm}[K\"ov\'ari, S\'os and Tur\'an~\cite{KST}] 
\label{thm:KST}
If $G$ is an $N \times N$ bipartite graph that does not contain~$K_{t,t}$ as a subgraph, then
\[
 e(G) \, \le \,  O\big( N^{2-1/t} \big).
\]
\end{thm}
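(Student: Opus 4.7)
The plan is to prove Theorem~\ref{thm:KST} by the classical double-counting argument, counting the number of ``cherries'' $(v, T)$ where $v$ is a vertex on one side of the bipartition and $T$ is a $t$-subset of the neighborhood of $v$ on the other side.

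More precisely, let $G$ be a bipartite graph with parts $A, B$ of size $N$ and no $K_{t,t}$ subgraph. First I would count, for each $v \in A$, the number of $t$-subsets of $N(v) \subset B$, obtaining the exact formula
\[
  \sum_{v \in A} \binom{d(v)}{t} \, = \, \big| \big\{ (v, T) \,:\, v \in A,\, T \in B^{(t)},\, T \subset N(v) \big\} \big|.
\]
The key observation, which uses the $K_{t,t}$-freeness, is an upper bound on this quantity from the other direction: for any fixed $T \in B^{(t)}$, the number of $v \in A$ with $T \subset N(v)$ is at most $t - 1$, since otherwise $T$ together with $t$ such common neighbours would form a copy of $K_{t,t}$. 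Summing over all $T \in B^{(t)}$ yields
\[
  \sum_{v \in A} \binom{d(v)}{t} \, \le \, (t - 1) \binom{N}{t}.
\]

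Next, I would lower-bound the left-hand side using convexity. The function $x \mapsto \binom{x}{t}$ (interpreted as the polynomial $x(x-1)\cdots(x-t+1)/t!$, and extended to be zero for $x < t-1$ if necessary) is convex on $[0, \infty)$, so by Jensen's inequality
\[
  \sum_{v \in A} \binom{d(v)}{t} \, \ge \, N \cdot \binom{\bar d}{t},
\]
where $\bar d = e(G)/N$ is the average degree from $A$ to $B$. Combining this with the upper bound gives $N \binom{\bar d}{t} \le (t-1) \binom{N}{t}$, and using the elementary bounds $\binom{\bar d}{t} \ge (\bar d - t + 1)^t / t!$ and $\binom{N}{t} \le N^t/t!$, a short rearrangement yields
\[
  \bar d \, \le \, (t - 1)^{1/t} N^{1 - 1/t} + t - 1 \, = \, O\big( N^{1 - 1/t} \big),
\]
from which $e(G) = N \bar d \le O(N^{2 - 1/t})$ follows immediately.

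There is no serious obstacle here; the only mild technical point is that the Jensen step requires $\binom{x}{t}$ to be convex as a function of a real variable, which is why one works with the polynomial $x(x-1)\cdots(x-t+1)/t!$ rather than the combinatorial binomial coefficient, and handles the small-degree vertices by noting that their contribution to the cherry count is non-negative (or by discarding vertices of degree less than $t-1$, which contribute at most $(t-1)N = O(N)$ edges in total and are thus absorbed into the $O(N^{2-1/t})$ error term).
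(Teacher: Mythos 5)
Your proof is correct: it is the standard K\H{o}v\'ari--S\'os--Tur\'an double-counting argument (counting pairs $(v,T)$ with $T$ a $t$-subset of $N(v)$, bounding the count from above via $K_{t,t}$-freeness and from below via convexity), and the technical points about the convexity of the real extension of $\binom{x}{t}$ and the low-degree vertices are handled properly. Note that the paper does not prove this statement at all --- it is quoted as a classical theorem with a citation to~\cite{KST} --- so your argument simply supplies the canonical proof that the paper omits, and nothing further is needed.
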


We can now prove Proposition~\ref{prop:matching_embed}.

\begin{proof}[Proof of Proposition~\ref{prop:matching_embed}]
Let $C,\eps > 0$ and $s,k \in \N$ be arbitrary, and let $n,p,q \in \N$ be as described in the statement of the lemma. Let $G$ be a two-coloured complete graph on at most $C \cdot 2^n$ vertices with no blue~$K_s$ and no red $Q_n$, and let~$\U$ be a collection of disjoint sets of vertices such that~$(a)$ and~$(b)$ hold. 
Set $m = 2^{9p}$ and $\gamma = 2^{-2p}$, and let
$$\U \, = \, \U_1 \cup \cdots \cup \U_r$$
be a partition of $\U$ into $(m,\gamma)$-components.\footnote{That is, let $\{\U_1,\ldots,\U_r\}$ be the collection of maximal its $(m,\gamma)$-connected sets.}

Suppose first that there exists $j \in [r]$ such that $\sum_{U \in \U_j} |U| \ge (1+3\eps)2^n$. Since $\U_j$ is $(m,\gamma)$-connected in $G_R$, it follows by Lemma~\ref{lem:tree:embed} that $Q_n \subset G_R$, which contradicts our choice of $G$. Thus we have $\sum_{U \in \U_j} |U| \le (1+3\eps)2^n$ for each $j \in [r]$.

We claim that there exists a set $X$ satisfying conditions~$(i)$ and~$(ii)$ of the proposition. In order to prove this, we shall define, for each pair of sets $U,W \in \U$ which are not in the same $(m,\gamma)$-component, sets $X_W(U) \subset U$ and $X_U(W) \subset W$ such that the graph 
$$G[U \setminus X_W(U),W \setminus  X_U(W)]$$ 
is dense in blue. To do so, observe first that the pair $(U,W)$ is not $(m,\gamma)$-good, since otherwise $U$ and $W$ would be in the same $(m,\gamma)$-component. By Definition~\ref{defn:m_good}, it follows that there exist sets $Y_W(U) \subset U$ and $Y_U(W) \subset W$, with 
\begin{equation}\label{eq:YWUbounds}
|Y_W(U)| \ge \big( 1 - \gamma \big) |U| \qquad \text{and} \qquad |Y_U(W)| \ge \big( 1 - \gamma \big) |W|,
\end{equation}
 such that $G_R[Y_W(U),Y_U(W)]$ does not contain a copy of $K_{t,t}$. By the K\"ov\'ari-S\'os-Tur\'an bound (Theorem~\ref{thm:KST}), and since $|U| \ge 2^{n-p} \gg 2^{n/2}$ for every $U \in \U$, it follows that 
$$d_R\big( Y_W(U),Y_U(W) \big) \, \ll \, 2^{-n/2t} \, \ll \, 2^{-n/2^{m+1}} \, \ll \, \frac{1}{n^4}.$$
Set $X_W(U) = U \setminus Y_W(U)$ for each such pair $U$ and $W$, and for each $i \in [r]$ and $U \in \U_i$, define
$$X(U) \, = \, \bigcup_{W \in \U \setminus \U_i} X_W(U).$$
Noting that $|\U| \le C \cdot 2^p$ (by~$(a)$ and our bound on $v(G)$), and using~\eqref{eq:YWUbounds}, it follows that
\begin{equation}\label{eq:boundingX}
|X(U)| \, \le \, \gamma |U| \cdot |\U| \, \le \, \gamma |U| \cdot C \cdot 2^p  \, \ll \, |U|,
\end{equation}
for each $U \in \U$, and hence $X := \bigcup_{U \in \U} X(U)$ satisfies conditions~$(i)$ and~$(ii)$, as required.
\end{proof}

\section{The proof of Theorem~\ref{thm:main}}\label{ProofSec}

Combining the results of Sections~\ref{SimonSec} and~\ref{MatchingSec}, it is now easy to deduce Theorem~\ref{thm:main}. We begin by proving a stability theorem. 

\begin{thm}\label{thm:stability}
For every $\eps > 0$ and $s \in \N$, there exists $\delta = \delta(s) > 0$ and $n_0 = n_0(\eps,s) \in \N$ such that the following holds for every $n \ge n_0$. 

Let $G$ be a two-coloured complete graph on $N \ge (1 - \delta) (s - 1) 2^n$ vertices with no blue $K_s$. Then either there exists a partition of $V(G) = S_0 \cup S_1 \cup \cdots \cup S_{s-1}$ such that:
\begin{itemize}
\item[$(a)$] $|S_0| \le \eps 2^n$ and $|S_j| \le \big( 1 + \eps \big) 2^n$ for every $j \in [s-1]$, \smallskip
\item[$(b)$] $G_R[S_j]$ is a red clique for every $1 \le j \le s-1$,\smallskip
\item[$(c)$] $d_R(S_i,S_j) \le 1/n^2$ for every $1 \le i < j \le s-1$, \smallskip
\item[$(d)$] $|N_B(v) \cap S_j| \le |S_j|/n^2$ for every $v \in S_0$ and some $j = j(v) \in [r]$, 
\end{itemize}
or $Q_n \subset G_R$. 
\end{thm}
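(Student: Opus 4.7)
The overall plan is to combine Proposition~\ref{prop:simon} with Proposition~\ref{prop:matching_embed}: the former decomposes $V(G)$ into a family $\U$ of disjoint sets that are sparse in blue (hence dense in red), and the latter, applied to $\U$, either embeds $Q_n$ into $G_R$ (in which case we are done) or produces a very rigid partition $\U = \U_1 \cup \cdots \cup \U_r$ and exceptional set $X$ of size at most $\eps\,2^n$ from which the stability conclusion is read off. We apply Proposition~\ref{prop:simon} to $G_B$ (which is $K_s$-free) with a sequence $a(\cdot)$ designed so that the returned family $\U$ satisfies $|U| \ge 2^{n-p}$ and $\Delta(G_B[U]) \le 2^{n-q}$ for every $U \in \U$, where $p = \log_{(k+4)}(n)$ and $q = \log_{(k)}(n)$; these are exactly the hypotheses of Proposition~\ref{prop:matching_embed}. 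Writing $Y_i := \bigcup_{U \in \U_i} U$, the output partition then satisfies $|Y_i| \le (1 + \eps)\,2^n$, $\sum_i |Y_i| \ge (1 - \eps)N$, and (interpreting (ii) with the red density, as the proof of that proposition actually establishes) $d_R(Y_i \setminus X, Y_j \setminus X) \le 1/n^4$ for $i \ne j$; equivalently, blue edges are nearly complete between distinct super-sets.

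The first claim is that $r = s - 1$ exactly. A vertex count against the upper bound $|Y_i| \le (1+\eps)\,2^n$ gives $r \ge s - 1$ once $\eps$ and $\delta$ are small. Conversely, if $r \ge s$, pick parts $Y_{j_1}, \ldots, Y_{j_s}$ and greedily choose a vertex $w_\ell \in Y_{j_\ell} \setminus X$ that is blue-adjacent to every previously chosen $w_{\ell'}$; the $1/n^4$ bound and a Markov argument ensure that at each step the set of valid choices has positive density in $Y_{j_\ell} \setminus X$, producing a blue $K_s$ and contradicting our hypothesis on $G$.

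To upgrade $\{Y_1, \ldots, Y_{s-1}\}$ into a partition satisfying~(a)--(d), call $v \in Y_j$ \emph{typical} if $|N_B(v) \cap Y_i| \ge (1 - 1/n^3)|Y_i|$ for every $i \neq j$; an averaging argument based on the $1/n^4$ density bound shows that all but an $s/n$-fraction of each $Y_j$ is typical. Let $S_j \subseteq Y_j$ be the typical vertices and put $S_0 := V(G) \setminus \bigcup_{j=1}^{s-1} S_j$. Conditions~(a) and~(c) then follow directly from the size bounds above and the fact that $S_j$ covers all but an $o(1)$-fraction of $Y_j$. For~(b), suppose $uv$ is a blue edge inside some $S_j$; using the typicality of $u$ and $v$, greedily pick $w_i \in S_i$ for each $i \neq j$ that is blue to $u$, $v$, and all previously chosen $w_{i'}$, producing a blue $K_s$ and a contradiction. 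For~(d), the same template applies: if $v \in S_0$ had $|N_B(v) \cap S_j| > |S_j|/n^2$ for every $j$, a greedy extension $w_j \in N_B(v) \cap S_j$ forming a blue clique with $v$ and all prior $w_{j'}$ again produces a blue $K_s$, with the $1/n^4$ red-density bound absorbing all Markov defects.

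The most delicate point is the first step: arranging the sequence $a(\cdot)$ so that Proposition~\ref{prop:simon} actually returns an index $i$ with the rapid drop from $a(i) \ge 2^{n-p}$ to $a(i+1) \le 2^{n-q}$ demanded by Proposition~\ref{prop:matching_embed}. Since we cannot control which index the decomposition proposition selects, the sequence must be arranged so that every admissible index yields a usable family; if necessary one iterates Proposition~\ref{prop:simon} on each individual cell to further suppress internal blue degrees before invoking Proposition~\ref{prop:matching_embed}. Once this bridging step is in place, the remaining arguments are variants of the same ``blue $K_s$-free combined with high blue density forces a greedy construction of $K_s$'' principle, repeated at the scales dictated by~(b),~(c) and~(d).
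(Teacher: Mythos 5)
Your proposal is correct and follows essentially the same route as the paper's proof: Proposition~\ref{prop:simon} applied to $G_B$, then Proposition~\ref{prop:matching_embed}, then $r = s-1$ via greedy construction of blue cliques, then pruning the vertices of high red degree to other parts (your ``atypical'' vertices); you are also right that condition~$(ii)$ of Proposition~\ref{prop:matching_embed} is really a bound on the \emph{red} density. The ``delicate point'' you flag is handled in the paper simply by taking $a(K-i) = 2^n/\log_{(4i)}(n)$, so that consecutive terms are separated by exactly four iterated logarithms and whichever index $i$ Proposition~\ref{prop:simon} returns yields the pair $p = \log_{(k+4)}(n)$, $q = \log_{(k)}(n)$ with $k = 4i+1$; no iteration on individual cells is needed.
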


\begin{proof}
Set $\delta = 1/2s$ and assume, without loss of generality, that $\eps > 0$ is sufficiently small. Note that we may assume also that $N \le \big( s - 1 + \eps s \big) 2^n$, since if $N$ is at least this large then we will prove that $Q_n \subset G_R$. Let $K = K(\eps^2,s)$ be the constant given by Proposition~\ref{prop:simon}. 

We first claim that there exists $k \le 4K - 3$ and a collection~$\U$ of disjoint vertex sets of $G$ such that, writing $p = \log_{(k+4)}(n)$ and $q = \log_{(k)}(n)$, we have
\begin{itemize}
 \item[$(i)$]  $|\bigcup_{U \in \U} U| \ge \big( 1 - \eps^2 \big) v(G)$. \smallskip
 \item[$(ii)$]  $|U| \ge 2^{n-p}$ for every $U \in \U$. \smallskip
 \item[$(iii)$]  $\Delta(G_B[U]) \le 2^{n-q}$ for every $U \in \U$.
\end{itemize}
To see this, set $a(0) = N$ and 
$$a(K - i) \, = \, \frac{2^n}{\log_{(4i)}(n)}$$
for each $1 \le i \le K - 1$, and note that $a(i+1) \le (\eps/8) a(i)$ for every $0 \le i \le K - 2$, since~$n \ge n_0$. By Proposition~\ref{prop:simon}, it follows that there exists $i \in [K - 1]$ and a collection~$\U$ of disjoint vertex sets of $G$ satisfying~$(i)$,~$(ii)$ and~$(iii)$, with $k = 4i + 1$, as claimed.

We next apply Proposition~\ref{prop:matching_embed} to the collection $\U$. Assuming that $Q_n \not\subset G_R$, it follows that there exists a partition $\U = \U_1 \cup \cdots \cup \U_r$ and a set $X \subset V(G)$ such that 
\begin{itemize}
 \item[$(iv)$] $|X| \le \eps 2^{n-1}$ and $\sum_{U \in \U_i} |U| \le \big( 1 + \eps \big) 2^n$ for every $i \in [r]$.\smallskip
 \item[$(v)$] $d_B\big( U \setminus X,W \setminus X \big) \le 1/n^4$ for every $U \in \U_i$ and $W \in \U_j$ with $i,j \in [r]$ and $i \neq j$.
\end{itemize}
By~$(i)$ and~$(iv)$, and the pigeonhole principle, it follows that $r \ge s-1$. We claim that in fact $r = s - 1$, and that each set $\bigcup_{U \in \U_i} U$ contains a very large clique; both facts will follow by essentially the same argument. In brief, if either property fails to hold then we shall remove vertices of high red degree and use the greedy algorithm to find a blue copy of $K_s$. 

Indeed, for each pair of disjoint vertex sets $S,T \subset V(G)$, let
$$Y_T(S) \, = \, \big\{ v \in S \,:\, |N_R(v) \cap T| \ge |T| / n^2 \big\}$$ 
denote the `vertices of high red degree' in $S$ with respect to $T$ and, for each $U \in \U_i$, let
$$Y(U) \, = \, \bigcup_{W \in\, \U \setminus \U_i} Y_{W \setminus X} \big( U \setminus X \big)$$
denote the vertices of $U \setminus X$ which send many red edges to some set in a different part. We claim that 
\begin{equation}\label{eq:YlessU}
|Y(U)| \, \le \, \frac{|U|}{n}
\end{equation}
for every $U \in \U$. Indeed, it follows from property~$(ii)$ and our assumption that $N = O(s \cdot 2^n)$ that $|\U| = O(s \cdot 2^p) \ll n$, and from property~$(v)$ that $|Y_{W \setminus X}(U \setminus X)| \le |U| / n^2$ for every $U \in \U_i$ and $W \in \U_j$ with $i,j \in [r]$ and $i \neq j$. 


Now, suppose that $r \ge s$. Let $U_1 \in \U_1, \ldots, U_s \in \U_s$, and choose vertices
\begin{equation}\label{eq:choosing:clique}
v_j \, \in \, U_j \setminus \Big( X \cup Y(U_j) \cup N_R(v_1) \cup \cdots \cup N_R(v_{j-1}) \Big)
\end{equation}
for each $j \in [s]$. To see that this is possible, recall that $|N_R(v_i) \cap U_j| \le |U_j| / n^2$ for every $i < j$, which holds since $v_i \not\in Y(U_i)$, and by~\eqref{eq:YlessU}, and note that moreover $|X \cap U| \ll |U|$ for each $U \in \U$, by~\eqref{eq:boundingX}. (Alternatively, we can simply throw out (into $S_0$) all sets $U$ which have a too-large intersection with $X$.) Clearly the vertices $\{v_1,\ldots,v_s\}$ span a blue copy of $K_s$ in $G$, which contradicts our assumption that $G_B$ is $K_s$-free.

Hence we may assume that $r = s - 1$. Set
\begin{equation}\label{eq:defSj}
S_j \, := \, \bigcup_{U \in \U_j} \Big( U \setminus \big( X \cup Y(U) \big) \Big)
\end{equation}
for each $j \in [s-1]$, and suppose that there is a blue edge $\{v_1,v_2\}$ in the set $S_1$, say. Then, by exactly the same argument as before, we can choose vertices
\begin{equation}\label{eq:choosing:clique:vtxs}
v_{j+1} \, \in \, S_j \setminus \Big( N_R(v_1) \cup \cdots \cup N_R(v_j) \Big)
\end{equation}
for $2 \le j \le s - 1$, and once again the vertices $\{v_1,\ldots,v_s\}$ span a blue copy of $K_s$ in $G$. Hence property~$(b)$ holds, and~$(c)$ follows easily from~$(v)$, together with~\eqref{eq:YlessU} and~\eqref{eq:defSj}. 

\enlargethispage{\baselineskip}

Finally, let us set
\begin{equation}\label{eq:defSzero}
S_0 \, = \, V(G) \setminus \big( S_1 \cup \cdots \cup S_{s-1} \big),
\end{equation}
and check that the conditions~$(a)$ and~$(d)$ hold. Indeed, by~\eqref{eq:YlessU} and since $|\U| = O(s \cdot 2^p) \ll n$, and using properties~$(i)$ and~$(iv)$, we have
$$|S_0| \, \le \, \eps^2 v(G) + |X| + \sum_{U \in \U} |Y(U)| \, \le \, \eps 2^n$$
and $|S_j| \le ( 1 + \eps ) 2^n$ for every $j \in [s-1]$. Moreover, if there exists a vertex $v \in S_0$ with $|N_B(v) \cap S_j| \ge |S_j|/n^2$ for every $j \in [r]$, then we can choose vertices $v_2 \in S_1, \ldots,v_s \in S_{s-1}$, as in~\eqref{eq:choosing:clique:vtxs}, so that $\{v,v_2,\ldots,v_s\}$ spans a blue $K_s$, which is a contradiction. This completes the proof of the stability theorem.
\end{proof}

In order to deduce Theorem~\ref{thm:main} from Theorem~\ref{thm:stability}, we shall need the following easy lemma.

\begin{lemma}\label{lem:finalembed}
Let $G$ be a two-coloured complete graph, and let $X,Y \subset V(G)$ be disjoint vertex sets with $|X| + |Y| \ge 2^n$ and $|Y| \le 2^{n-3}$. Suppose that $G_R[X]$ is a clique, and that 
$$| N_B(v) \cap X | \, \le \, |X| / n^2$$
for every $v \in Y$. Then $Q_n \subset G_R$. 
\end{lemma}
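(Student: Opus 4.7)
The plan is to send $Y$ to an independent set in $Q_n$ and then fill in the remaining vertices of $Q_n$ using vertices of $X$ via a matching produced by Hall's theorem. Let $A$ and $B$ denote the two bipartition classes of $Q_n$, so $|A|=|B|=2^{n-1}$. Since $|Y|\le 2^{n-3}\le |A|$, I fix any injection $\sigma\colon Y\to A$ and set $I=\sigma(Y)$, which is then an independent set in $Q_n$. I shall extend $\sigma^{-1}$ to an embedding $\varphi\colon V(Q_n)\to V(G)$ of $Q_n$ into $G_R$ by mapping $V(Q_n)\setminus I$ injectively into $X$. Because $G_R[X]$ is a clique and $Q_n$ is bipartite, every edge of $Q_n$ not incident to $I$ is automatically sent to a red edge, so the only constraint is that each $Q_n$-edge between $I$ and $B$ map to a red edge of $G$. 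I encode this in a bipartite graph $H$ with parts $V(Q_n)\setminus I$ and $X$, joining $\vx$ to $u$ if and only if $u\sigma^{-1}(\vy)\in E(G_R)$ for every $\vy\in I$ with $\vy\sim\vx$ in $Q_n$; a matching of $H$ saturating $V(Q_n)\setminus I$ (possible since $|X|\ge 2^n-|Y|=|V(Q_n)\setminus I|$) then yields~$\varphi$.

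For $\vx\in A\setminus I$ every $u\in X$ is a valid partner in $H$, while for $\vx\in B$ the forbidden partners form $\bigcup_{v\in Y:\,\sigma(v)\sim\vx}N_B(v)\cap X$, which has size at most $|X|/n$ since $\vx$ has at most $n$ neighbours in $I$ and $|N_B(v)\cap X|\le |X|/n^2$. For $S\subseteq V(Q_n)\setminus I$, Hall's condition $|N_H(S)|\ge |S|$ is easy in two cases: if $S$ meets $A\setminus I$ then $N_H(S)=X$ and $|X|\ge 2^n-|Y|\ge |S|$; and if $S\subseteq B$ with $|S|\le |X|/2$, then any $\vx_0\in S$ already satisfies $|N_H(\vx_0)|\ge |X|(1-1/n)\ge |S|$. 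The main case is $S\subseteq B$ with $|S|>|X|/2$.

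In this case I bound the ``co-neighbourhood'' $F:=X\setminus N_H(S)$ by double counting. Setting $B(u):=N_B(u)\cap Y$, membership $u\in F$ says that every $\vx\in S$ is a $Q_n$-neighbour of some $\sigma(v)$ with $v\in B(u)$, i.e.\ $S\subseteq N_{Q_n}\big(\sigma(B(u))\big)$, and hence $|B(u)|\ge |S|/n$. Summing and swapping the order,
\[
 |F|\cdot\frac{|S|}{n} \, \le \, \sum_{u\in F}|B(u)| \, \le \, \sum_{v\in Y}|N_B(v)\cap X| \, \le \, \frac{|Y||X|}{n^2},
\]
giving $|F|\le |Y||X|/(n|S|)<2|Y|/n$. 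Since $|S|\le |B|=2^{n-1}$ and $|X|\ge 2^n-|Y|$, we have $|X|-|S|\ge 2^{n-1}-|Y|\ge 3\cdot 2^{n-3}$, which easily dominates $2|Y|/n\le 2^{n-2}/n$ for $n\ge 2$, so $|F|\le |X|-|S|$. Hall's condition thus holds, the matching exists, and combined with $\sigma^{-1}$ it produces the embedding $Q_n\subset G_R$. The only step requiring genuine thought is the double-counting estimate in the main case; choosing $I$ as an independent set, so that $Y$-vertices only impose constraints at $B$-positions, is what makes the count clean.
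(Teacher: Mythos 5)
Your proof is correct. The central idea---mapping $Y$ onto an independent set of $Q_n$ so that, because $G_R[X]$ is a clique, the only edges needing verification are those from $\sigma(Y)$ into $X$, each controlled by the hypothesis $|N_B(v)\cap X|\le |X|/n^2$---is exactly the paper's: the paper uses the lowermost even layers of $Q_n$ as the independent set, you use one side of the bipartition, and both fit since $|Y|\le 2^{n-3}$. Where you diverge is in completing the embedding. The paper embeds the vertices adjacent to the image of $Y$ (all lying in the bottom half of the cube, so at most roughly $2^{n-1}$ of them) greedily, noting that each forbids at most $n\cdot|X|/n^2=|X|/n$ choices and that $|X|\ge 2^n-|Y|\ge 7\cdot 2^{n-3}$ leaves ample slack; you instead invoke Hall's theorem, which forces you to handle large sets $S\subseteq B$ via the double-counting bound on $F=X\setminus N_H(S)$. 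That estimate is correct ($|F|\le |Y||X|/(n|S|)<2|Y|/n\le 2^{n-2}/n$, comfortably below $|X|-|S|\ge 3\cdot 2^{n-3}$), so the matching exists. Both arguments are sound; the greedy route avoids the large-$S$ case entirely and is shorter, while your matching formulation packages the injectivity and the red-edge constraints into one application of a standard theorem.
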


\begin{proof}
We construct an embedding $\varphi$ of $Q_n$ into $G_R$ by embedding the lowermost even layers of~$Q_n$ into~$Y$, and then embedding the remaining vertices into~$X$, as follows:
\begin{itemize}
 \item[1.] Let $0 \le \ell < n/4$ be such that
  \[
  \sum_{i=0}^{\ell-1} \binom{n}{2i} \le |Y| < \sum_{i=0}^{\ell} \binom{n}{2i},
  \]
and embed the first $\ell$ even layers of the hypercube arbitrarily into $Y$. If some vertices of $Y$ remain unused, embed part of the $(\ell + 1)$st even layer into them. \smallskip

 \item[2.] For each $\vx \in Q_n$ with $\varphi(\vx) \not\in Y$ and $\|\vx\| \le 2\ell + 1 \le n/2$, select an unused vertex of $X$ which avoids the blue neighbourhoods (in $G$) of its neighbours in $Q_n$. That is,
 $$\varphi(\vx) \, \in \, X \setminus \bigcup_{\vy \sim \vx} N_B\big( \varphi(\vy) \big),$$
where~$\vy$ ranges over the $Q_n$-neighbours of~$\vx$ which have already been embedded. Since $\vx$ has at most $n$ such neighbours, and each has at most $|X| / n^2$ blue neighbours in $X$, it follows that we will never run out of vertices. \smallskip
  
  \item[3.] Embed the remaining elements of~$Q_n$ into the remaining elements of~$X$ arbitrarily.
\end{itemize}
It is easy to see that this gives an embedding of $Q_n$ into $G_R$, as required.
\end{proof}

We are finally ready to determine the Ramsey number $r(K_s,Q_n)$. 

\begin{proof}[Proof of Theorem~\ref{thm:main}]
Let~$G$ be a two-coloured complete graph on $(s-1)(2^n-1)+1$ vertices that does not contain a blue~$K_s$; we claim that $Q_n \subset G_R$. By Theorem~\ref{thm:stability}, either $Q_n \subset G_R$ or there exists a partition of $V(G) = S_0 \cup S_1 \cup \cdots \cup S_{s-1}$ satisfying properties $(a)$-$(d)$ of that theorem. By property~$(d)$, we may assign each vertex $v \in S_0$ to a set $S_j$ such that $|N_B(v) \cap S_j| \le |S_j|/n^2$. We thus obtain a partition
$$V(G) = T_1 \cup \cdots \cup T_{s-1}$$
such that $T_j$ is the union of a red clique $S_j$ and a set with low blue degree into that clique. By the pigeonhole principle, we must have $|T_j| \ge 2^n$ for some $j \in [s-1]$. By Lemma~\ref{lem:finalembed}, it follows that $Q_n \subset G_R[T_j]$, as required.
\end{proof}

We end the paper by noting that our proof can easily be modified to prove the following generalization of Theorem~\ref{thm:main}. Given a graph $H$ with chromatic number $s$, let $\sigma(H)$ denote the size of the smallest colour class in any $s$-colouring of $H$.


\begin{thm}\label{thm:generalH}
Let $H$ be a graph. Then
\begin{equation}\label{eq:genHthm}
 r(H,Q_n) = \big( \chi(H) - 1 \big)\big( 2^n - 1 \big) + \sigma(H)
\end{equation}
for every $n \ge n_0(H)$.
\end{thm}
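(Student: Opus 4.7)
The lower bound $r(H,Q_n) \ge (s-1)(2^n-1) + \sigma(H)$ follows from the standard extremal construction. Partition $(s-1)(2^n-1) + \sigma(H) - 1$ vertices into $s-1$ blocks of size $2^n-1$ together with a further block of size $\sigma(H)-1$, colour the edges within the first $s-1$ blocks red, and colour every other edge blue. The red graph is a disjoint union of cliques of size at most $2^n - 1$ and so cannot contain the connected graph $Q_n$. The blue graph is the complete $s$-partite graph with parts of sizes $2^n-1, \ldots, 2^n-1, \sigma(H)-1$; any copy of $H$ here would induce an $s$-colouring of $H$ with some colour class of size at most $\sigma(H)-1$, contradicting the definition of $\sigma(H)$.

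For the upper bound, the plan is to mirror the proof of Theorem~\ref{thm:main}, using an analogue of Theorem~\ref{thm:stability} in which the hypothesis ``no blue $K_s$'' is replaced by ``no blue $H$'' (with $s = \chi(H)$), while the conclusion still produces a partition $V(G) = S_0 \cup S_1 \cup \cdots \cup S_{s-1}$ satisfying properties (a)--(d). The proof of this generalized stability theorem would run as in the original, applying Propositions~\ref{prop:simon} and~\ref{prop:matching_embed} with $v(H)$ in place of $s$ (valid since the absence of a blue $H$ implies the absence of a blue $K_{v(H)}$). To show that the resulting partition $\U = \U_1 \cup \cdots \cup \U_r$ satisfies $r \le s-1$, observe that if $r \ge s$ then, picking representatives $U_i \in \U_i$ for $i = 1, \ldots, s$ and restricting each to the set of vertices with small red-degree to every other $U_j$, one can greedily embed an $s$-colouring $C_1, \ldots, C_s$ of $H$ by placing $C_i$ in $U_i$. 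At each step the number of forbidden vertices is at most $v(H) \cdot |U_i|/n^2 = o(|U_i|)$, so the greedy embedding succeeds and produces a blue copy of $H$, the desired contradiction.

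Given the generalized stability theorem, Theorem~\ref{thm:generalH} follows exactly as Theorem~\ref{thm:main} follows from Theorem~\ref{thm:stability}: each $v \in S_0$ is assigned to some $T_j$ using property $(d)$, so $\sum_j |T_j| = N = (s-1)(2^n-1) + \sigma(H)$; since $\sigma(H) \ge 1$, pigeonhole yields $|T_j| \ge 2^n$ for some $j$, and Lemma~\ref{lem:finalembed} embeds $Q_n$ into $G_R[T_j]$.

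The main obstacle is the step of the generalized stability theorem that forces $G_R[S_j]$ to be a red clique. In the original argument, a blue edge $\{v_1,v_2\} \subseteq S_j$ together with $s-2$ greedy choices from the other parts produces a blue $K_s$, contradicting the $K_s$-free hypothesis; but for general $H$ the absence of a blue $H$ only forbids a blue $K_{v(H)}$, which for $v(H) > s$ need not be precluded by a blue $K_s$. The strategy I would adopt is to replace this extension by a bipartite one: if $G_B[S_j]$ contains a blue $K_{a,a}$ with $a = v(H) - s + 2$, then placing two colour classes of an $s$-colouring of $H$ on the two sides of this $K_{a,a}$ and the remaining $s-2$ colour classes into the other parts $S_i$ (using the dense blue structure between $S_j$ and $S_i$) produces a blue $H$. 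The remaining case, in which $G_B[S_j]$ has blue edges but no blue $K_{a,a}$, would be handled by a K\"ov\'ari--S\'os--Tur\'an estimate: such a graph has only $O(|S_j|^{2-1/a})$ blue edges, so the vertices of high blue degree ($\ge |S_j|/n^2$) number only $o(|S_j|)$; these can be absorbed into $S_0$ without breaking the bound $|S_0| \le \eps 2^n$, after which a mildly strengthened version of Lemma~\ref{lem:finalembed} (tolerating max blue degree $|X|/n^2$ inside $X$ rather than requiring $G_R[X]$ to be a clique) produces the red $Q_n$.
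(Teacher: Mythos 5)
Your lower bound and your fix for the ``red clique'' step are essentially the paper's own: the paper also replaces the clique extension by a bipartite one (a blue $K_{v(H),v(H)}$ inside $S_j$ plus greedy choices in the other parts yields a blue $H$), and then invokes the K\"ov\'ari--S\'os--Tur\'an theorem to conclude that $G_B[S_j]$ is sparse; the resulting dense-but-not-complete red set is handled in the final embedding by a vertex-switching argument rather than by moving high-degree vertices out. Your argument that $r \le s-1$ also matches the paper's.

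However, there is a genuine gap in your final deduction, and it sits exactly where the term $\sigma(H)$ comes from. You assert that ``each $v \in S_0$ is assigned to some $T_j$ using property $(d)$,'' i.e.\ that every exceptional vertex has small blue degree to some part $S_j$. For $K_s$ this is immediate (a vertex with large blue degree to all $s-1$ parts completes a blue $K_s$ greedily), but for general $H$ a \emph{single} such vertex gives nothing: the extremal colouring itself contains $\sigma(H)-1$ vertices joined entirely in blue to everything, and these can never be assigned to any part. Moreover the pigeonhole step is extremely tight: with $N = (s-1)(2^n-1)+\sigma(H)$ you may leave at most $\sigma(H)-1$ vertices unassigned before some $T_j$ is guaranteed to reach $2^n$, so you cannot absorb even $o(2^n)$ problematic vertices into a discarded set. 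The paper closes this gap with a dependent random choice argument: letting $A$ be the set of vertices with blue degree at least $2^n/n^2$ to every $S_j$, either $|A| \le n^C$ or some $\sigma(H)$-subset of $A$ has common blue neighbourhoods of size $2^n/n^{C-1}$ in every part, which yields a blue $H$ (two colour classes cannot suffice here --- one needs all of $\sigma(H)$). One then shows that at most $\sigma(H)-1$ vertices of $A$ have small red degree to every part (these form the genuinely unassignable set $T_0$), assigns the remaining at most $n^C$ vertices of $A$ by their large red degree to some part, and runs a more careful greedy embedding in which these few vertices are placed in even layers at pairwise distance at least three. Your proposal contains neither the dependent random choice step nor any mechanism for placing vertices that have large blue degree to the part they are assigned to, so as written the upper bound argument does not go through.
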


Note that the lower bound in~\eqref{eq:genHthm} is trivial, since a blue complete $s$-partite graph with $s-1$ parts of size $2^n - 1$ and one part of size $\sigma(H) - 1$ contains no blue copy of $H$, and no red copy of $Q_n$. In order to prove Theorem~\ref{thm:generalH} we have two tasks: we must prove a version of Theorem~\ref{thm:stability} for blue $H$-free colourings, and we must prove a corresponding embedding lemma. We begin by stating the generalized stability theorem.

\begin{thm}\label{thm:Hstability}
For every $\eps > 0$ and every graph $H$ with chromatic number $s$, there exist $C > 2$, $\delta > 0$ and $n_0 = n_0(C,\eps,H) \in \N$ such that the following holds for every $n \ge n_0$.

Let $G$ be a two-coloured complete graph on $N \ge (1 - \delta) (s - 1) 2^n$ vertices with no blue copy of~$H$. Then either there exists a partition of $V(G) = S_0 \cup S_1 \cup \cdots \cup S_{s-1}$ such that:
\begin{itemize}
\item[$(a)$] $|S_0| \le \eps 2^n$ and $|S_j| \le \big( 1 + \eps \big) 2^n$ for every $j \in [s-1]$, \smallskip
\item[$(b)$] $d_B(S_j) \le 1 / n^C$ for every $1 \le j \le s-1$,\smallskip
\item[$(c)$] $d_R(S_i,S_j) \le 1/n^C$ for every $1 \le i < j \le s-1$, \smallskip
\item[$(d)$] $|N_B(v) \cap S_j| \ge |S_j|/n^2$ for every $j \in [r]$ for at most $n^C$ vertices $v \in S_0$, 
\end{itemize}
or $Q_n \subset G_R$. 
\end{thm}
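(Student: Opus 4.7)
The plan is to follow the proof of Theorem~\ref{thm:stability} closely, replacing each appearance of a greedy ``find blue $K_s$'' argument with a corresponding embedding of $H$ into $G_B$. Fix a proper $s$-coloring $W_1, \ldots, W_s$ of $H$, set $h = |V(H)|$, and choose $C$ sufficiently large in terms of $h$.

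The first two stages are identical to those of Theorem~\ref{thm:stability}: apply Proposition~\ref{prop:simon} to $G_B$ (which is $K_s$-free, since $\chi(H) = s$) with parameter $\eps^2$, then apply Proposition~\ref{prop:matching_embed}; assuming $Q_n \not\subset G_R$, this produces a partition $\U = \U_1 \cup \cdots \cup \U_r$, an exceptional set $X$, and high-red-degree sets $Y(U) \subset U$ with $|Y(U)| \le |U|/n$. To show $r = s - 1$, assume $r \ge s$, pick $U_j \in \U_j$ for $j \in [s]$, and greedily embed $H$ into $G_B$ by placing color class $W_j$ into $U_j \setminus (X \cup Y(U_j))$; at each step at most $h$ neighbors of the current vertex have been embedded and each forbids at most $|U_j|/n^2$ vertices, so a valid choice always exists, yielding the contradiction.

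For property~$(b)$, suppose $d_B(S_j) \ge 1/n^C$ for some $j$. Since $|S_j| \ge 2^n$, the K\H{o}v\'ari--S\'os--Tur\'an bound (Theorem~\ref{thm:KST}) produces a blue copy of $K_{h,h}$ inside $S_j$. Because $\chi(H) = s$, in our fixed $s$-coloring some pair of color classes $W_\alpha, W_\beta$ must span an edge of $H$ (otherwise merging them would give a proper $(s-1)$-coloring); embed the bipartite graph $H[W_\alpha \cup W_\beta]$ inside the blue $K_{h,h}$, and extend greedily by placing each remaining color class into a distinct remaining part, using the between-part red density bound from Stage 2. Property~$(c)$ follows exactly as in the proof of Theorem~\ref{thm:stability}.

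Finally, for property~$(d)$, suppose some $V_0 \subset S_0$ with $|V_0| > n^C$ satisfies $|N_B(v) \cap S_j| \ge |S_j|/n^2$ for every $v \in V_0$ and every $j \in [r]$. A standard dependent-random-choice argument, applied iteratively over the $s - 1$ parts, produces a subset of $|W_1|$ vertices inside $V_0$ whose common blue neighborhood in each $S_j$ has size at least $|S_j|/n^{C''}$, for some constant $C'' = C''(h)$. Embed $W_1$ into these vertices (which needs no within-class edges, since $W_1$ is independent in $H$) and extend greedily to $W_2, \ldots, W_s$ inside the common blue neighborhoods in $S_1, \ldots, S_{s-1}$; for $C > C''$ there is ample room at each step, giving the forbidden blue $H$. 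The main obstacle is parameter bookkeeping -- choosing $C$, and the constants implicit in the applications of Propositions~\ref{prop:simon} and~\ref{prop:matching_embed}, large enough in terms of $h$ that every greedy step and every K\H{o}v\'ari--S\'os--Tur\'an application succeeds simultaneously.
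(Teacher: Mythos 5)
Your proposal follows essentially the same route as the paper's: the same two-stage decomposition via Propositions~\ref{prop:simon} and~\ref{prop:matching_embed}, the same greedy embeddings of $H$ (one colour class per part) to force $r = s-1$ and to establish~$(b)$ via K\"ov\'ari--S\'os--Tur\'an, and dependent random choice for~$(d)$. One correction: your justification for the first step is wrong --- a graph with no copy of $H$ need not be $K_s$-free for $s = \chi(H)$ (a $C_5$-free graph can contain triangles, for instance); what is true, and what the paper uses, is that $G_B$ is $K_{v(H)}$-free, and this weaker conclusion is all that Propositions~\ref{prop:simon} and~\ref{prop:matching_embed} require. With that substitution (and with $C$ and the parameters of the two propositions chosen large in terms of $v(H)$, as you note) the argument goes through as in the paper.
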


The proof of Theorem~\ref{thm:Hstability} is very similar to that of Theorem~\ref{thm:stability}. Indeed, since an $H$-free graph is obviously $K_{v(H)}$-free, we may still apply Propositions~\ref{prop:simon} and~\ref{prop:matching_embed} to the blue graph and the collection $\U$, respectively. Moreover, using the greedy algorithm exactly as in~\eqref{eq:choosing:clique}, if $r \ge s$ then one easily obtains a copy of $H$, simply by choosing $v(H)$ vertices at each step instead of only one. Similarly, if $S_1$ contains a blue complete bipartite graph with $v(H)$ vertices in each part, then we can again find a blue copy of $H$, as in~\eqref{eq:choosing:clique:vtxs}. It follows, by Theorem~\ref{thm:KST}, that $d_B(S_j) \le 1 / n^C$ for every $1 \le j \le s-1$, as required.

We thus obtain properties~$(a)$,~$(b)$ and~$(c)$, as before. In order to prove property~$(d)$, we need the following claim, which follows via a simple application of dependent random choice (see~\cite{DRC} for an excellent survey of this technique). Let $G$ be as in the statement of Theorem~\ref{thm:Hstability}, let $S_0,\ldots,S_{s-1}$ be the sets defined as in~\eqref{eq:defSj} and~\eqref{eq:defSzero}, and set
$$A \, := \, \big\{ v \in S_0 : |N_B(v) \cap S_j| \ge 2^n / n^2 \textup{ for every } j \in [s-1] \big\}.$$
The claim says that either $A$ is small, or there exists a subset of $A$ of size $\sigma(H)$ with sufficiently large common blue neighbourhood in each set $S_j$ that we can find a blue copy of $H$ using the greedy algorithm. Let us assume, as we may, that $C$ is sufficiently large.

\begin{claim*}
Either $|A| \le n^C$, or there exists a set $X \subset A$ with $|X| = \sigma(H)$ such that 
\begin{equation}\label{eq:claim:bignbhd}
\bigg| \bigcap_{v \in X} N_B(v) \cap S_j \bigg| \, \ge \, \frac{2^n}{n^{C-1}}
\end{equation}
for every $j \in [s-1]$. 
\end{claim*}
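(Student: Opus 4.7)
The plan is to apply a standard dependent random choice. Set $\sigma := \sigma(H)$ and assume $|A| > n^C$, since otherwise the claim is immediate. Let $t \in \N$ be a parameter to be chosen. For each $j \in [s-1]$, independently pick a uniformly random $t$-subset $T_j \subseteq S_j$, and define
\[
A' \, := \, \big\{ v \in A \,:\, T_j \subseteq N_B(v) \text{ for every } j \in [s-1] \big\}.
\]
Since $d_B(v,S_j) \ge 2^n/n^2$ and $|S_j| \le (1+\eps)2^n$ for each $v \in A$, the standard estimate on $\binom{d_B(v,S_j)}{t}/\binom{|S_j|}{t}$ yields
\[
\Ex|A'| \, \ge \, |A| \prod_{j=1}^{s-1} \left( \frac{d_B(v,S_j)}{|S_j|} \right)^t \, \ge \, \frac{|A|}{(2n^2)^{t(s-1)}}.
\]

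Next, bound the expected number of ``bad'' sigma-subsets of $A'$. Call $X \in \binom{A}{\sigma}$ \emph{$j$-bad} if $\big| \bigcap_{v \in X} N_B(v) \cap S_j \big| < 2^n/n^{C-1}$. If $X$ is $j$-bad, then (using only the randomness of $T_j$) the probability that $X \subseteq A'$ is at most
\[
\Pr\!\Big[ T_j \subseteq \bigcap_{v \in X} N_B(v) \cap S_j \Big] \, \le \, \left( \frac{2^n}{n^{C-1} |S_j|} \right)^t \, \le \, n^{-(C-2)t}.
\]
Union bounding over $j \in [s-1]$ and over all $X \in \binom{A}{\sigma}$,
\[
\Ex\big[ \#\{\text{bad sigma-subsets of } A' \} \big] \, \le \, (s-1) |A|^\sigma \cdot n^{-(C-2)t}.
\]

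The last step is to pick $t$ so that $\Ex|A'|$ dominates both $\sigma$ and the expected number of bad sigma-subsets. A routine calculation shows that any integer $t$ in the window
\[
\frac{(\sigma-1)\log|A|}{(C-2s)\log n} \, \le \, t \, \le \, \frac{\log|A|}{2(s-1)\log n}
\]
works, and this window is non-empty provided $C > 2(s-1)\sigma + 2$ and $|A| > n^{2(s-1)}$, both of which hold by our hypothesis $|A| > n^C$ and our freedom to take $C$ arbitrarily large. For such a $t$ we have $\Ex\big[ |A'| - \#\{\text{bad sigma-subsets of } A'\} \big] > \sigma$, so some realization of $\vec T$ yields $A'$ from which, greedily deleting one vertex per bad sigma-subset, more than $\sigma - 1$ vertices remain and contain no bad sigma-subset. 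Any $\sigma$ of the survivors form the desired set $X$.

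The main technical point is the joint balancing of $t$: it must be large enough that the union bound $(s-1)|A|^\sigma n^{-(C-2)t}$ beats $|A|^\sigma$, yet small enough that the shrinkage factor $(2n^2)^{-t(s-1)}$ does not wipe out $|A|$. These competing demands carve out a window for $t$ that is non-empty precisely when $C$ is at least of order $s\sigma$, justifying the hypothesis that $C$ is sufficiently large.
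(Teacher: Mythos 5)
Your proof is correct and follows essentially the same route as the paper: a dependent random choice argument, sampling $t$ random vertices from each $S_j$, computing the expected size of the common blue neighbourhood inside $A$ and the expected number of bad $\sigma(H)$-subsets, and concluding by a first-moment/deletion step. The only differences are cosmetic --- you sample without rather than with repetition, leave $t$ in a window rather than fixing $t = \lfloor \log|A|/(2s\log n)\rfloor$, and make the deletion step explicit where the paper is slightly more informal.
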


\begin{proof}
With foresight, set $t = \lfloor (\log |A|) / (2s \log n) \rfloor$, and choose $t$ elements of each set $S_j$ uniformly at random, with repetition. Let $X$ be the common blue  neighbourhood of these vertices in $A$. We claim that $X$ has the required properties with positive probability.

To see this, let us first calculate the expected size of $X$. By our choice of $t$, it is
$$\sum_{v \in A} \prod_{j = 1}^{s-1} \left( \frac{|N_B(v) \cap S_j|}{|S_j|} \right)^t \, \ge \, |A| \cdot n^{-2(s-1)t} \, \gg \, \sigma(H),$$
assuming that $|A| \ge n^C$. Next, let us calculate the expected number of subsets of $X$ of size $\sigma(H)$ which have at most $2^n / n^{C-1}$ common neighbours in some $S_j$. It is at most
$${|A| \choose \sigma(H)} \left( \frac{1}{n^{C-1}} \right)^t \, \le \, |A|^{\sigma(H)} n^{-Ct/2} \, \ll \, 1$$
if $|A| \ge n^C$, again by our choice of $t$. It follows immediately that $X$ has the desired properties with positive probability, and hence that there exists such a set $X \subset A$, as required. 
\end{proof}

It is easy to see that, using the greedy algorithm exactly as before, if there is a set $X \subset A$ as in the claim, then $H \subset G_B$. Hence we may assume that $|A| \le n^C$, as claimed. This completes the proof of Theorem~\ref{thm:Hstability}. 

In order to deduce Theorem~\ref{thm:generalH}, first let $T_0 \subset A$ denote the set of vertices of $S_0$ which have red degree at most $\eps 2^n$ into every set $S_j$. Note that if $|T_0| \ge \sigma(H)$ then these vertices form a set $X$ satisfying~\eqref{eq:claim:bignbhd} (assuming $\eps$ was chosen sufficiently small), in which case $H \subset G_B$, as noted above. Now, by the definition of $A$, we may partition the vertices of $S_0 \setminus A$ according to the set $S_j$ into which they send at most $2^n / n^2$ blue edges, and the vertices of $A \setminus T_0$ according to the set $S_j$ into which they send at least $\eps 2^n$ red edges. We thus obtain a partition
$$V(G) = T_0 \cup T_1 \cup \cdots \cup T_{s-1}$$
where $|T_0| \le \sigma(H) - 1$, and for each $j$ we have $T_j = S_j \cup Z_j \cup A_j$, where $Z_j \subset S_0 \setminus A$ and $A_j \subset A \setminus T_0$ are the parts of the partitions described above corresponding to $S_j$. 

By the pigeonhole principle, some set $T_j$ must contain at least $2^n$ elements; it only remains to show that $Q_n \subset G_R[T_j]$. We embed greedily, first placing the (at most $n^C$) vertices of $A_j$ into even layers at pairwise distance at least three from one another, then embedding the vertices of $Z_j$, and finally those of $S_j$ (embedding neighbours of $\varphi(A_j)$ first, then neighbours of $\varphi(Z_j)$, then the rest), as in Lemma~\ref{lem:finalembed}. The fact that $S_j$ is no longer a red clique (but rather a very dense red set) can be dealt with by a standard `vertex-switching' argument, first introduced in~\cite{SS}, and used (for example) in~\cite[Lemma~21]{ABS}. Briefly, given any embedding of $Q_n$ into $V(G)$, and any blue edge of $G$ corresponding to an edge of $Q_n$, we can find two vertices of $G$ such that, if we swap their pre-images, then we obtain an embedding with fewer blue edges. This completes the proof of Theorem~\ref{thm:generalH}.

\vskip-0.5cm

\end{document}